\let\cal=\mathcal
\def\R{{\mathbb R}}
\newtheorem{thm}{Theorem}[section]
\newtheorem{lem}[thm]{Lemma}
\newtheorem{prop}[thm]{Proposition}
\newtheorem{exa}[thm]{Example}
\theoremstyle{definition}
\newtheorem{de}[thm]{Definition}
\theoremstyle{remark}
\newtheorem{rem}[thm]{Remark}
\numberwithin{equation}{section}
\newcommand{\rmd}{{\rm d}}
\begin{document}

\title[Almost Periodic Solutions and Stable solutions for SDEs]{Almost Periodic Solutions and Stable Solutions for Stochastic Differential Equations}

\author{Yong Li}
\address{Y. Li: School of Mathematics and Statistics, Center for Mathematics and Interdisciplinary Sciences,
Northeast Normal University, Changchun 130024, P. R. China; School of Mathematics,
Jilin University, Changchun 130012, P. R. China}
\email{liyong@jlu.edu.cn}

\author{Zhenxin Liu}
\address{Z. Liu: School of Mathematical Sciences,
Dalian University of Technology, Dalian 116024, P. R. China}
\email{zxliu@dlut.edu.cn}

\author{Wenhe Wang}
\address{W. Wang: School of Basic Sciences,
Changchun University of Technology, Changchun 130012, P. R. China}
\email{die\_jahre@163.com}

\thanks{The first author is supported by National Research Program of China Grant 2013CB834100 and NSFC Grant 11171132;
the second author is supported by NSFC Grants 11271151, 11522104, and the startup and Xinghai Youqing funds from
Dalian University of Technology.}

\date{September 14, 2016}

\subjclass[2010]{60H10, 34C27, 37B25}  

\keywords{Stochastic differential equation, Almost periodic solution, Stable in distribution, Lyapunov function}


\begin{abstract}
In this paper, we discuss the relationships between stability and almost periodicity for solutions of stochastic differential equations.
Our essential idea is to get stability of solutions or systems by some inherited properties of Lyapunov functions. Under suitable
conditions besides Lyapunov functions, we obtain the existence of almost periodic solutions in distribution.
\end{abstract}
\maketitle
\section{Introduction}
\setcounter{equation}{0}

In 1924--1926, Bohr founded the theory of almost periodic functions  \cite{B1,B2,B3}. Roughly speaking, an almost periodic function
means that it is periodic up to any desired level of accuracy. Since many differential equations
arising from physics and other fields admit almost periodic solutions, almost periodicity becomes an important property of dynamical systems
and is extensively studied in the area of differential equations and dynamical systems. We refer the reader to the books,
e.g. Amerio and Prouse \cite{AP}, Fink \cite{Fink74}, Levitan and Zhikov \cite{LZ},
Yoshizawa \cite{Y75} etc, for an exposition.

For deterministic differential equations, the existence of almost periodic solutions was studied under various stability assumptions.
Markov \cite{M} defined a kind of stability which implies almost periodicity.
Deysach and Sell \cite{DS} assumed that there exists one bounded uniformly stable solution. Miller \cite{Miller}
assumed the existence of one bounded totally stable solution. Seifert \cite{S66} proposed the so-called $\Sigma$-stability, while Sell
\cite{Sell1,Sell2} proposed the stability under disturbance from the hull; actually, these two concepts of stability
are equivalent. Coppel \cite{C} sharpened Miller's result without the uniqueness of solutions by using the properties of
asymptotically almost periodic functions; Yoshizawa \cite{Y69} developed the idea of Coppel and improved all the results
mentioned above. On the other hand, the Lyapunov's second method was employed to investigate the
existence of almost periodic solutions: Hale \cite{Hale64} and Yoshizawa \cite{Y64} assumed the existence of Lyapunov
functions for pairs of solutions to conclude the uniform asymptotic stability in the large of the bounded solution.

However, the various stability assumptions mentioned above are not easily verified directly in practice. It is known that some stabilities,
such as uniform stability and uniform asymptotic stability, can be characterized by Lyapunov functions. So it seems that it is a good idea to
give some explicit conditions on Lyapunov functions to study the existence of almost periodic solutions, as Hale and
Yoshizawa did in \cite{Hale64,Y64}. This is exactly what we are to do in the present paper for stochastic differential equations (SDE).

For the stochastically perturbed semilinear equations, almost periodic solutions were studied by assuming that
the linear part of these equations satisfies the property of exponential dichotomy; see Halanay \cite{Hal}, Morozan and Tudor \cite{MT},
Da Prato and Tudor \cite{DT}, and Arnold and Tudor \cite{AT}, among others. For general SDEs, V\^arsana \cite{V} studied asymptotical almost
periodic (weaker than almost periodic) solutions by assuming that the stochastic system is total stable.
Very recently, Liu and Wang \cite{LW} investigated
the almost periodic solutions to SDEs by the separation method.

This paper is organized as follows. Section 2 is a preliminary
section. Section 3 contains main results of this paper, in which we study almost periodic solutions for SDEs by mainly the Lyapunov function method.
In Section 4, we illustrate our results by some applications.


\section{Preliminaries}
Assume that $(M,d)$ is a complete metric space. Here is the definition
of $M$-valued almost periodic and uniform almost periodic functions
in the sense of Bohr:
\begin{de}\label{nou001}
(i) Assume $\varphi(\cdot):\mathbb{R}\rightarrow M$ is continuous. We say set $A\subset\mathbb{R}$ is {\em relatively dense}
in $\mathbb{R}$ if for any given $\epsilon>0$,
there exists $l=l(\epsilon)>0$, such that for every $a\in\mathbb{R}$,
$(a,a+l)\cap A\neq\emptyset$. If there is a set $T(\epsilon,\varphi)$ relatively dense
such that for any $\tau\in T(\epsilon,\varphi)$ we have
\begin{equation}\label{evenuse02}
\sup_{t\in\mathbb{R}}\rho(\varphi(t+\tau),\varphi(t))<\epsilon,
\end{equation}
then we say that the function $\varphi$ is {\em almost periodic}.

(ii) A continuous function $f(\cdot,\cdot):\mathbb{R}\times\mathbb{R}^d\rightarrow\mathbb{R}^d$
is {\em almost periodic in $t$ uniformly on compact sets} if for every compact set $S\subset\mathbb{R}^d$ there exists a
relatively dense set $T(\epsilon,f,S)$ such that for every $\tau\in T(\epsilon,f,S)$ we have:
\begin{equation}\label{evenuse02}
\sup_{(t,x)\in\mathbb{R}\times S}|f(t+\tau,x)-f(t,x)|<\epsilon.
\end{equation}
We also say such $f(t,x)$ is {\em uniformly almost periodic} for short.
\end{de}

Bochner \cite{Boch27, Boch62} gave an equivalent condition to Bohr's almost periodicity.
The above definition of uniform almost periodicity can be found in Yoshizawa's book \cite{Y75};
Seifert and Fink made another
definition of uniform almost periodicity (see Definition $2.1$ in \cite{Fink74}).



For sequence $\alpha=\{\alpha_n\}$, we denote $\lim_{n\rightarrow+\infty}\varphi(\cdot+\alpha_n)$
as $T_{\alpha}\varphi(\cdot)$ if it exists, and the mode of convergence will be specified at each use; the similar notation will be used for
$T_\alpha f(\cdot,\cdot)=\lim_{n\rightarrow+\infty}f(\cdot+\alpha_n,\cdot)$. For simplicity, we also denote $\varphi(\cdot+a)$ by $\varphi_a(\cdot)$
and $f(\cdot+ a,\cdot)$ by $f_a(\cdot,\cdot)$ for given $a\in\R$.

For $\mathbb{R}^d$-valued uniformly almost periodic function $f(t,x)$, we denote
\begin{equation*}
\begin{split}
H(f):= & \{g(t,x);\text{there is sequence $\alpha$ such that } T_\alpha f=g\\
& \quad \text{ uniformly on
$\mathbb{R}\times S$ for each compact set $S\subset\mathbb{R}^d$}\},
\end{split}
\end{equation*}
as the {\em hull of $f$}. The hull has the following properties:

\begin{prop}\label{uap-p2}
Let $f(t,x)$ be uniformly almost periodic. Then:
\begin{itemize}
\item[{\rm (i)}] any $g\in H(f)$ is also uniformly almost periodic and $H(g)=H(f)$;

\item[{\rm (ii)}] for any $g\in H(f)$, there exists a sequence $\alpha$ with $\alpha_n\to +\infty$ (or $\alpha_n\to -\infty$) such that
$T_{\alpha} f=g$ uniformly on $\R\times S$ for any compact $S\subset \R^d$;

\item[{\rm (iii)}] for any sequence $\alpha'$, there exists a subsequence $\alpha\subset \alpha'$ such that
$T_\alpha f$ exists uniformly on $\R\times S$ for any compact $S\subset\R^d$.

\end{itemize}
\end{prop}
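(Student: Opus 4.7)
My plan is to prove the three items in the order (iii), (i), (ii), since (iii) provides the crucial compactness tool needed to upgrade pointwise to uniform convergence in the other parts. Throughout I would use two standing facts about a uniformly almost periodic $f$: that $f$ is bounded and uniformly continuous on $\mathbb{R}\times S$ for each compact $S\subset\mathbb{R}^d$, and that for every $\epsilon>0$ the set $T(\epsilon,f,S)$ of $\epsilon$-almost periods is relatively dense.

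\textbf{Step 1 (proof of (iii)).} Given $\alpha'=\{\alpha'_n\}$, the translates $\{f_{\alpha'_n}\}$ are uniformly bounded and equicontinuous on $\mathbb{R}\times S$ for each compact $S$. Arzel\`a--Ascoli on $[-N,N]\times S$, followed by a diagonal extraction over $N\in\mathbb{N}$ and over an exhaustion $S_k\uparrow\mathbb{R}^d$ by compacts, produces a subsequence $\alpha\subset\alpha'$ along which $f_{\alpha_n}$ converges uniformly on every bounded box. To promote this to uniform convergence on $\mathbb{R}\times S$, fix $\epsilon>0$ and use the $(\epsilon/3)$-almost periods of $f$ on $S$: every $t\in\mathbb{R}$ lies within distance $l(\epsilon/3,f,S)$ of such a $\tau$, reducing $|f(t+\alpha_n,x)-f(t+\alpha_m,x)|$ to the corresponding quantity evaluated on a fixed bounded $t$-interval plus a $2\epsilon/3$ almost period error. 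This is the usual Bochner argument.

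\textbf{Step 2 (proof of (i)).} Suppose $g=T_\alpha f$ uniformly on $\mathbb{R}\times S$. Passing to the limit in $\sup_{(t,x)\in\mathbb{R}\times S}|f(t+\tau+\alpha_n,x)-f(t+\alpha_n,x)|<\epsilon$ shows every $\epsilon$-almost period of $f$ on $S$ is also one of $g$, so $g$ is uniformly almost periodic. For $H(g)\subset H(f)$, given $h=T_\beta g$, diagonalize: for each $n$ choose $m_n$ large enough that $\sup_{(t,x)\in[-n,n]\times S_n}|g(t+\beta_n,x)-f(t+\beta_n+\alpha_{m_n},x)|<1/n$, using the uniform convergence $f_{\alpha_k}\to g$. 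Setting $\gamma_n:=\beta_n+\alpha_{m_n}$, one gets $f_{\gamma_n}\to h$ pointwise on $\mathbb{R}\times\mathbb{R}^d$. Apply (iii) to $\gamma=\{\gamma_n\}$ to extract a subsequence along which $T_\gamma f$ converges uniformly to some limit, which must coincide with $h$ by pointwise agreement, so $h\in H(f)$. The reverse inclusion $H(f)\subset H(g)$ is symmetric because $g$ is itself uniformly almost periodic, so (iii) is available for $g$ and the same diagonal argument applies.

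\textbf{Step 3 (proof of (ii)).} Given $g=T_\alpha f\in H(f)$, I would shift $\alpha$ by a large almost period. Fixing $S_n\uparrow\mathbb{R}^d$, use relative density to pick a $(1/n)$-almost period $\sigma_n$ of $f$ on $S_n$ with $\sigma_n>n+|\alpha_n|$; such $\sigma_n$ exists because $T(1/n,f,S_n)$ meets $(n+|\alpha_n|,\,n+|\alpha_n|+l(1/n,f,S_n))$. Set $\tilde\alpha_n:=\alpha_n+\sigma_n\to+\infty$. The almost period inequality yields $\sup_{(t,x)\in\mathbb{R}\times S_n}|f_{\tilde\alpha_n}(t,x)-f_{\alpha_n}(t,x)|<1/n$, and combining with $f_{\alpha_n}\to g$ gives $T_{\tilde\alpha}f=g$ uniformly on $\mathbb{R}\times S$ for every compact $S$. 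The $-\infty$ case is obtained by choosing $\sigma_n<-(n+|\alpha_n|)$ from the same relatively dense set. The main obstacle in the whole proposition is the identification step in (i): the natural diagonal construction provides only pointwise convergence, and extracting a uniformly convergent subsequence whose limit can be pinned down requires (iii), which is why (iii) has to be proved first.
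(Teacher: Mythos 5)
Your argument is correct, and there is nothing in the paper to compare it against: the authors state Proposition~\ref{uap-p2} without proof, treating it as a standard property of the hull of a uniformly almost periodic function (it is essentially Bochner's criterion together with the basic hull lemmas in Yoshizawa \cite{Y75} and Fink \cite{Fink74}). Your route --- Arzel\`a--Ascoli plus a diagonal extraction to get uniform convergence on compacta, then the three-term almost-period trick to upgrade to uniform convergence on all of $\R\times S$, the translation-by-a-large-almost-period device for (ii), and the diagonal approximation $\gamma_n=\beta_n+\alpha_{m_n}$ for $H(g)\subset H(f)$ --- is exactly the classical proof. The one step you should make explicit is the basis for the ``symmetric'' reverse inclusion $H(f)\subset H(g)$ in (i): the diagonal argument there requires knowing that $f$ itself is a uniform limit of translates of $g$, i.e.\ that $f\in H(g)$. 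This is true but is not a formal symmetry of the hypotheses; it follows from the change of variables $s=t+\alpha_n$, which turns $\sup_{(s,x)\in\R\times S}|g(s,x)-f(s+\alpha_n,x)|\to 0$ into $\sup_{(t,x)\in\R\times S}|g(t-\alpha_n,x)-f(t,x)|\to 0$, so that $T_{-\alpha}g=f$ uniformly on $\R\times S$. With that one line inserted, the proof is complete; as a minor stylistic point, in Step 2 you may as well choose $m_n$ so that the approximation of $g_{\beta_n}$ by $f_{\beta_n+\alpha_{m_n}}$ holds with the supremum over all of $\R\times S_n$ (uniform convergence on $\R\times S_n$ gives this for free), which yields $f_{\gamma_n}\to h$ uniformly on $\R\times S$ directly and lets you dispense with the second appeal to (iii).
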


We respectively denote $[0,+\infty)$ and $(-\infty,0]$ as $\mathbb{R}_+$ and $\mathbb{R}_-$,
and recall the definition of asymptotically almost periodic function valued in $M$ as follows.
\begin{de}\label{aap}
Suppose that function $f(\cdot):\mathbb{R}_+\rightarrow M$ is continuous
and there exists an almost periodic function
$\eta(\cdot):\mathbb{R}\rightarrow M$, such that
\begin{equation}\label{appart}
\lim_{t\rightarrow+\infty}d(f(t),\eta(t))=0.
\end{equation}
Then we say $f(t)$ is {\em aymptotically almost periodic} (a.a.p. in short) on $\mathbb{R}_+$.
The $\eta(t)$ in \eqref{appart} is called the {\em almost periodic part of $f$}. The function $f$
being a.a.p. on $\mathbb{R}_-$ can be defined similarly.
\end{de}

\begin{rem}[See \cite{LZ}, Chapter $1$]
If $f$ is a.a.p. on $\mathbb{R}_+$ or $\mathbb{R}_-$, then its almost periodic part is unique.
\end{rem}



\begin{lem}\label{uselessmust01}
The following statements are equivalent to $f$ being asymptotic almost periodic on $\R_+$:
\begin{itemize}
\item[(i)] For any sequence $\alpha'=\{\alpha'_n\}$ such that $\alpha'_n\rightarrow+\infty$, there exists suitable subsequence $\alpha\subset\alpha'$ such that $T_{\alpha}f(t)$ uniformly exists on $\mathbb{R}_+$.
\\
\item[(ii)] For any sequence $\alpha'=\{\alpha'_n\}$ such that $\alpha'_n\rightarrow+\infty$, there exists a subsequence
$\alpha\subset\alpha'$ and a constant $\sigma=\sigma(\alpha)>0$ such that $T_{\alpha}f$ exists pointwise on $\mathbb{R}_+$ and if sequences
$\delta >0$, $\beta\subset\alpha,\gamma\subset\alpha$ are such that
\[
T_{\delta+\beta}f=h_1 \quad\hbox{and}\quad T_{\delta+\gamma}f=h_2
\]
exist pointwisely on $\mathbb{R}_+$, then either $h_1\equiv h_2$ or
$\inf_{t\in\mathbb{R}_+}d(h_1(t),h_2(t))\geq 2\sigma$.
\end{itemize}
The similar results hold when $f$ is asymptotic almost periodic on $\R_-$.
\end{lem}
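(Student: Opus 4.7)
The plan is to prove the cycle $(\text{a.a.p.})\Rightarrow(i)\Rightarrow(ii)\Rightarrow(\text{a.a.p.})$, which yields all three pairwise equivalences. The first two links follow quickly from Bochner's criterion and the uniformity in (i) respectively; the return to $(\text{a.a.p.})$ is the substantive technical step.

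For $(\text{a.a.p.})\Rightarrow(i)$, decompose $f$ in the metric sense as an almost periodic function $\eta$ on $\mathbb{R}$ plus a remainder with $d(f(t),\eta(t))\to 0$. Given $\alpha'\to+\infty$, Bochner's criterion applied to $\eta$ supplies a subsequence $\alpha\subset\alpha'$ with $T_\alpha\eta$ uniformly convergent on $\mathbb{R}$; since $\alpha_n\to+\infty$, the remainder's translate $d(f(\cdot+\alpha_n),\eta(\cdot+\alpha_n))$ vanishes uniformly on $\mathbb{R}_+$, so $T_\alpha f$ converges uniformly on $\mathbb{R}_+$. For $(i)\Rightarrow(ii)$, the uniform convergence $T_\alpha f\to\eta$ forces any pointwise limits $h_1=T_{\delta+\beta}f$ and $h_2=T_{\delta+\gamma}f$ with $\beta,\gamma\subset\alpha$ to coincide: substituting $s=t+\delta_n$ in the uniform estimate $d(f(s+\alpha_n),\eta(s))<\epsilon$ yields $h_1(t)=h_2(t)=\lim_n\eta(t+\delta_n)$, so (ii) holds trivially with any $\sigma>0$.

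For $(ii)\Rightarrow(\text{a.a.p.})$ I would proceed in three stages. First, derive from (ii) that $f$ is bounded (immediate from pointwise convergence of translates) and uniformly continuous on $\mathbb{R}_+$: failure of the latter would give pairs $s_n,p_n\to+\infty$ with $|s_n-p_n|\to 0$, and (via an IVT-type refinement along the arc from $f(s_n)$ to $f(p_n)$) one can arrange $d(f(s_n),f(p_n))\in[\epsilon,2\epsilon]$ for any prescribed $\epsilon<\sigma$; applying (ii) to $\{s_n\}$ with a shift by $p_n-s_n$ then produces two pointwise limits whose distance at $t=0$ lies in $(0,2\sigma)$, contradicting the separation dichotomy. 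Second, construct the almost periodic part $\eta:\mathbb{R}\to M$ by a double diagonalization: apply (ii) to $\{n\}$ to obtain $\eta_+$ pointwise on $\mathbb{R}_+$, extend to $\mathbb{R}$ by applying (ii) to each shifted sequence $\alpha-k$ for $k\in\mathbb{N}$ and piecing consistent limits together, and verify Bochner's criterion for $\eta$ by reducing it back to (ii) on $f$ through further diagonals (uniform continuity of $f$ making all pointwise limits continuous). Third, show $d(f(t),\eta(t))\to 0$ as $t\to+\infty$: if this failed along some $t_n\to+\infty$ with $d(f(t_n),\eta(t_n))\in[\epsilon,2\epsilon]$ and $\epsilon<\sigma$, then constructing $h_1,h_2$ via (ii) with a shift sequence $\delta$ tied to $\{t_n\}$ and subsequences $\beta,\gamma\subset\alpha$ chosen diagonally so that $h_1(0)=\lim_n f(t_n)$ and $h_2(0)=\lim_n\eta(t_n)$ gives $d(h_1(0),h_2(0))\in[\epsilon,2\epsilon]\subset(0,2\sigma)$, again violating the separation in (ii).

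The main obstacle is the $(ii)\Rightarrow(\text{a.a.p.})$ direction: each of the three stages requires a joint extraction of $\delta,\beta,\gamma$ that forces pointwise-limit distances into the forbidden interval $(0,2\sigma)$ cut out by the separation constant $\sigma$, and the bookkeeping needed to keep $\beta,\gamma$ as genuine subsequences of a single $\alpha$ — rather than independent sequences of real numbers — is what makes the argument delicate.
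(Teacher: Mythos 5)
The paper states this lemma without proof --- (i) is the classical Fr\'echet--Bochner characterization of asymptotic almost periodicity and (ii) is a Favard--Amerio separation criterion, both taken as known (cf. \cite{Fink74,Y75,LW}) --- so there is no in-paper argument to compare against and I am judging your proposal on its own merits. Your links $(\text{a.a.p.})\Rightarrow(\mathrm{i})\Rightarrow(\mathrm{ii})$ are correct and essentially complete. The genuine gap is in $(\mathrm{ii})\Rightarrow(\text{a.a.p.})$: the dichotomy in (ii) only constrains pairs of limits of the specific form $T_{\delta+\beta}f$ versus $T_{\delta+\gamma}f$ with the \emph{same} shift sequence $\delta$ and with $\beta,\gamma$ both subsequences of the single $\alpha$ supplied by (ii). Your Stage 1 (uniform continuity) tries to feed it the pair $\lim_n f(s_n)$ versus $\lim_n f(s_n+\delta_n)$ --- same base points, different shifts --- which would force $\gamma_n=s_n-\delta_n\notin\alpha$; your Stage 3 tries to feed it $\lim_n f(t_n)$ versus $\lim_n\eta(t_n)$, and the latter is not a limit of translates of $f$ at all. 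Neither pair is of the admissible form, so the separation cannot be invoked as written; you flag this bookkeeping as ``delicate,'' but it is precisely where the sketch fails rather than a detail to be tidied. Stage 2 (building $\eta$ on all of $\mathbb{R}$ and verifying Bochner's criterion for it) is likewise only gestured at.

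The repair is structural: do not aim at a.a.p.\ directly, but prove $(\mathrm{ii})\Rightarrow(\mathrm{i})$ and then quote the classical equivalence of (i) with asymptotic almost periodicity. Take $\alpha,\sigma$ from (ii); if the pointwise limit $T_\alpha f$ is not uniform on $\mathbb{R}_+$, there are $t_k\ge 0$ and $n_k,m_k\to\infty$ with $d(f(t_k+\alpha_{n_k}),f(t_k+\alpha_{m_k}))\geq\epsilon_0$. Since this quantity tends to $0$ at $t=0$ by pointwise convergence, the intermediate value theorem applied to the continuous map $t\mapsto d(f(t+\alpha_{n_k}),f(t+\alpha_{m_k}))$ lets you replace $t_k$ by $t'_k\in(0,t_k]$ at which the distance equals exactly a fixed $\epsilon\in(0,2\sigma)$ --- this is your IVT idea, now applied to a pair that fits the required format, with $\delta_k=t'_k>0$, $\beta_k=\alpha_{n_k}$, $\gamma_k=\alpha_{m_k}$. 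The limits $h_1,h_2$ needed to trigger the dichotomy are then obtained by applying the \emph{first} clause of (ii) to the sequences $\{t'_k+\alpha_{n_k}\}$ and $\{t'_k+\alpha_{m_k}\}$, each of which tends to $+\infty$; this is exactly where the extraction difficulty you worried about evaporates, since no equicontinuity is needed. The conclusion $d(h_1(0),h_2(0))=\epsilon\in(0,2\sigma)$ contradicts (ii), so $T_\alpha f$ converges uniformly on $\mathbb{R}_+$ and (i) holds. This route requires neither uniform continuity of $f$ nor a from-scratch reconstruction of the almost periodic part.
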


In this paper, we study the SDE:
\begin{equation}\label{tagA001}
\rmd X(t)=f(t,X(t))\rmd t+g(t,X(t))\rmd W(t),
\end{equation}
where $f(t,x)$ is an $\R^d$-valued continuous function,
$g(t,x)$ is a $(d\times m)$-matrix-valued continuous function, and $W$ is a standard
$m$-dimensional Brownian motion. And we usually assume the coefficients are uniformly almost periodic.
Note that almost periodicity is defined on the whole $\mathbb{R}$,
but the Brownian motions in SDEs usually defined on $\mathbb{R}_+$. So we need to introduce
two-sided Brownian motion:
for two independent Brownian motions $W_1(t)$, $W_2(t)$ on the probability space
$(\Omega, \mathcal{F}, {P})$, let
\[
W (t) =\left\{ \begin{array}{ll}
                 W_1(t), &  \hbox{ for } t\ge 0, \\
                 -W_2(-t), &  \hbox{ for } t\le 0.
               \end{array}
 \right.
\]
Then $W$ is a two-sided Brownian motion defined on the filtered probability space
$(\Omega, \mathcal{F}, \mathbf{P},\mathcal{F}_t )$
with $\mathcal F_t=\sigma\{W(u): u \le t\}, t\in\R$.

Furthermore, we always assume \eqref{tagA001}'s coefficients satisfy the
following condition:

\medskip
\noindent {\bf (H)} The functions $f$, $g$ are uniformly almost periodic. And there
exists a constant $K>0$ such that, for every $t\in\mathbb{R}$ and $x,y\in\mathbb{R}^d$,
$$|f(t,x)-f(t,y)|\vee|g(t,x)-g(t,y)|\leq K|x-y|,$$
where $a\vee b=\max\{a,b\}$ for $a,b\in\mathbb{R}$.\\

For SDE \eqref{tagA001} satisfying condition {\bf (H)}, if there exists a sequence $\alpha$
such that $T_\alpha f=\tilde f$ and $T_\alpha g=\tilde g$, we denote the SDE with coefficients
$(T_\alpha f, T_\alpha g)$ as $(\tilde f,\tilde g)\in H(f,g)$ or
$T_\alpha(f,g)=(\tilde f,\tilde g)$ for short. Besides, by the definition of uniform almost periodic function,
if coefficients of \eqref{tagA001} satisfy the condition {\bf (H)}, they must satisfy the global linear
growth condition, that is, there is some constant $\hat{K}>0$, such that
$$|f(t,x)|\vee|g(t,x)|\leq \hat{K}(1+|x|^2)\text{, $\forall t\in\mathbb{R}$, $\forall x\in\mathbb{R}^d$.}$$

For $\mathbb{R}^d$-valued random variable $X$ on the probability
space $(\Omega,\mathcal{F},\mathbf{P})$, we denote $\mathcal{L}(X)$ as the distribution (or law) of $X$ on $\mathbb{R}^d$.
We denote by $\mathcal P(\R^d)$ the space of all Borel probability measures on $\R^d$.
For an $\R^d$-valued random variable $X$ or stochastic process $Y(t)$, we define the following norms:
$$\|X\|_2:=(\int_{\Omega}|X(\omega)|^2\rmd\mathbf{P}(\omega))^{\frac{1}{2}},\quad \|Y(t)\|_{\infty}:=\sup_{t}\|Y(t)\|_2.$$
In what follows, we denote:
\begin{align*}
& L^2(P,\R^d):=\{X: \|X\|_2 <\infty\}, \quad \mathcal{B}_r:=\{X\in L^2(P,\R^d): \|X\|_2 \le r\}, \\
& \mathcal{D}_r:=\{ \mu\in\mathcal{P}(\R^d): \exists X\in\mathcal{B}_r\text{ such that $\mathcal{L}(X)=\mu$} \}, \\
&\mathcal{B}_r^{\eqref{tagA001}}=\mathcal{B}_r^{(f,g)} :=\{X(\cdot): (X,W) \hbox{ weakly solves equation } (f,g)  \hbox{ on } \R \\
&\qquad\qquad\qquad\qquad \hbox{ on some filtered probability space for some } W \hbox{ and } \|X\|_\infty\le r \},\\
&\mathcal{D}_r^{\eqref{tagA001}}=\mathcal{D}_r^{(f,g)} :=\{\mu: \mu(\cdot) =\mathcal{L} (X(\cdot)) \hbox{ for some } X \in \mathcal{B}_r^{(f,g)} \},\\
&\mathcal{B}^{\eqref{tagA001}}=\mathcal{B}^{(f,g)}=\cup_{r>0}\mathcal{B}_r^{(f,g)}\text{, }
\mathcal{D}^{\eqref{tagA001}}=\mathcal{D}^{(f,g)}=\cup_{r>0}\mathcal{D}_r^{(f,g)}.
\end{align*}

We focus on the almost periodicity of distributions of SDEs' solutions instead of solutions
themselves. It's well known that $\mathcal{P}(\mathbb{R}^d)$
can be metrized with some distance
(which we denote as $\rho(\cdot,\cdot)$), such that the convergence under
distance $\rho(\cdot,\cdot)$ is equivalent to the convergence under the
weak-* topology of $\mathcal{P}(\mathbb{R}^d)$, and $\mathcal{P}(\mathbb{R}^d)$ is a
complete metric space under $\rho(\cdot,\cdot)$ (see \cite[Theorem $2.6.2$]{P} for details).

For a $\mathcal{P}(\mathbb{R}^d)$-valued continuous function $f$, one of the  necessary conditions
of the almost periodicity of $f$ is that,
the set $\{f(t);t\in\mathbb{R}\}$ is contained in some compact set. Naturally we need to consider
distributions of solutions for SDEs within some compact set. We get
compactness on the space $\mathcal{P}(\mathbb{R}^d)$ by $\mathbf{L}^2$-boundedness (see \cite{Pro} for details).

We define the uniform stability of distributions of solutions for SDEs as follows:

\begin{de}\label{h}
$\forall t_0\in\mathbb{R}$, we say element $\mu(t)\in\mathcal{D}^{\eqref{tagA001}}_r$ is
{\em uniformly stable on $[t_0,+\infty)$ within $\mathcal{D}^{\eqref{tagA001}}_r$} if for every $\epsilon>0$,
there exists $\delta=\delta(\epsilon)>0$ such that for any $t_1\geq t_0$ and any other element
$\eta(t)\in\mathcal{D}^{\eqref{tagA001}}_r$ satisfying
$$\rho(\mu(t_1),\eta(t_1))<\delta,$$
we have
\begin{equation*}
\sup_{t\in[t_1,+\infty)}\rho(\mu(t),\eta(t))<\epsilon.
\end{equation*}
If $\mu(t)$ is uniformly stable on $[t_1,+\infty)$ for every $t_1\in\mathbb{R}$, we call it
{\em uniformly stable} for short.
\end{de}

In what follows, we get the stability of solutions' distributions mainly by Lyapunov functions, which satisfy the following condition:
\medskip

\noindent {\bf (L)}
Assume that $V(\cdot,\cdot):\mathbb{R}\times\mathbb{R}^d\rightarrow\mathbb{R}_+$ is a function
$C^2$ in $t\in\mathbb{R}$, $C^3$ in $x\in\mathbb{R}^d$.
Assume that the differentials $D^iV$ of $V$ for $i=0,1,2$ and the derivatives $V_{tx_ix_j}$, $V_{x_ix_jx_k}$ for $i,j,k=1,2,\cdots,d$
are bounded on $\mathbb{R}\times S$ for every compact set $S\subset\mathbb{R}^d$. Furthermore,
\begin{equation}\label{whether01}
\inf_{t\in \R}V(t,x)>0\text{ for each $x\neq 0$, and $V(t,0)=0$ for all $t\in \R$.}
\end{equation}

\section{Main Results}

In this paper, we need following results from \cite{LW} for further discussion:

\begin{prop}[\cite{LW}, Theorem 3.1]\label{key}
Consider the following family of It\^o stochastic equations on $\R^d$
\begin{equation}
\rmd X = f_n(t,X)\rmd t + g_n(t,X)\rmd W,\quad n=1,2,\cdots,
\end{equation}
where $f_n$ are $\R^d$-valued, $g_n$ are $(d\times m)$-matrix-valued, and
$W$ is a standard $m$-dimensional Brownian motion.
Assume that $f_n, g_n$ satisfy condition {\bf (H)}. Assume further that $f_n\to f$, $g_n\to g$
point-wise on $\mathbb R\times \mathbb R^d$ as $n\to\infty$, and that
$X_n(t)\in \mathcal{B}^{(f_n,g_n)}_{r}$ for some constant $r>0$, independent of $n$. Then there is a
subsequence of $\{X_n\}$ which
converges in distribution, uniformly on compact intervals, to some
$X(t)\in \mathcal{B}^{(f, g)}_{r}$.
\end{prop}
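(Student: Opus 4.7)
The plan has three stages: establish tightness of the sequence of laws $\{\mathcal{L}(X_n)\}$ on the space of continuous paths $C(\R,\R^d)$ (equipped with uniform convergence on compacta), identify any subsequential limit as a weak solution of the SDE driven by $(f,g)$, and verify that the $L^2$ bound $\|X\|_\infty\le r$ is preserved.

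\emph{Tightness.} Fix a compact interval $I=[-T,T]$. Condition {\bf(H)}, together with almost periodicity of $f_n(\cdot,0)$ and $g_n(\cdot,0)$, gives a uniform-in-$n$ linear growth bound $|f_n(t,x)|\vee|g_n(t,x)|\le\hat K(1+|x|)$. Applying It\^o's formula to $|X_n(t)|^2$, Doob's inequality, BDG, and Gronwall, the hypothesis $\sup_t\|X_n(t)\|_2\le r$ produces $E[\sup_{t\in I}|X_n(t)|^2]\le C_I$ independently of $n$. To get path-regularity strong enough for Kolmogorov--Chentsov, I would localize: set $\tau_n^M:=\inf\{t\ge -T:|X_n(t)|\ge M\}$ and $X_n^M:=X_n(\cdot\wedge\tau_n^M)$. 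On $\{t<\tau_n^M\}$ the coefficients along the path are bounded by $\hat K(1+M)$, so for any $p>2$ the BDG and Jensen inequalities yield
\[
E\big|X_n^M(t)-X_n^M(s)\big|^{p}\le C(M,p,T)\,|t-s|^{p/2},
\]
which is Kolmogorov--Chentsov tightness of $\{X_n^M\}_n$ in $C(I,\R^d)$. Meanwhile, $P(\tau_n^M\le T)\le P(\sup_{t\in I}|X_n(t)|\ge M)\le C_I/M^2\to 0$ as $M\to\infty$ uniformly in $n$, so the laws of $X_n$ and $X_n^M$ agree outside a set of arbitrarily small probability. Combining, $\{X_n\}_n$ is tight in $C(I,\R^d)$, and a diagonal extraction over $I_k=[-k,k]$ produces a subsequence $\{X_{n_k}\}$ whose laws converge in $\mathcal{P}(C(\R,\R^d))$ (uniformly on compacta) to the law of some continuous process $X$.

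\emph{Identification of the limit.} After a further subsequence and Skorokhod's representation theorem, we may assume $X_{n_k}\to X$ a.s.\ uniformly on compacta on a common probability space. Since the $X_n$ are generally driven by different Brownian motions (possibly on different spaces), I would identify $X$ via the Stroock--Varadhan martingale problem rather than pass to the limit in a stochastic integral. For each $\phi\in C_c^{\infty}(\R^d)$ the process
\[
M_n^{\phi}(t)=\phi(X_n(t))-\phi(X_n(0))-\int_0^{t}L_n\phi(u,X_n(u))\,\rmd u,\quad L_n\phi=f_n\cdot\nabla\phi+\tfrac12\mathrm{Tr}(g_ng_n^{T}D^2\phi),
\]
is a martingale for each $n$. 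The pointwise convergence $f_n\to f$, $g_n\to g$, combined with the equi-Lipschitz control in $x$ (which upgrades convergence to locally uniform in $x$) and the compact support of $\nabla\phi$, $D^2\phi$, implies $L_n\phi\to L\phi$ boundedly on the relevant compact range. Passing to the limit in the identity $E[H(X_n|_{[0,s]})(M_n^{\phi}(t)-M_n^{\phi}(s))]=0$ for bounded continuous test functionals $H$ shows that $M^{\phi}$ is a martingale in the natural filtration of $X$; by Stroock--Varadhan there is a filtered probability space carrying $X$ and a Brownian motion $\widetilde W$ with $(X,\widetilde W)$ a weak solution of $(f,g)$. Finally, Fatou applied to the Skorokhod realization gives $\|X(t)\|_2\le\liminf_k\|X_{n_k}(t)\|_2\le r$ for every $t$, so $X\in\mathcal{B}_r^{(f,g)}$.

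\emph{Main obstacle.} The crux is tightness with only an $L^2$ bound on the marginals. A direct Kolmogorov--Chentsov estimate would need $E|X_n(t)-X_n(s)|^p\le C|t-s|^{1+\beta}$ with $\beta>0$, hence $p>2$, i.e.\ uniform $p$-th moment bounds that the hypotheses do not supply. The localization trick bypasses this by trading a global moment bound for an exit-time large-deviation estimate that \emph{is} available from the $L^2$ control via Doob. A secondary subtlety lies in passing to the limit inside the generator term: one has to use equi-Lipschitz control in $x$ to turn pointwise convergence $f_n\to f$, $g_n\to g$ into a convergence compatible with the random argument $X_n(u)\to X(u)$.
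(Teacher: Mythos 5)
This statement is quoted verbatim from \cite{LW} (Theorem 3.1) and the paper under review gives no proof of it, so there is nothing internal to compare against; I can only assess your argument on its own terms. Your reconstruction is correct and is essentially the canonical proof of such weak-compactness results: localization to recover Kolmogorov--Chentsov tightness from mere $L^2$ marginal bounds, Skorokhod representation plus the Stroock--Varadhan martingale problem to identify the limit as a weak solution of $(f,g)$ (which correctly sidesteps the fact that the $X_n$ are weak solutions driven by possibly different Brownian motions on different spaces), and Fatou for the bound $\|X\|_\infty\le r$. The one point you should make explicit rather than implicit is the uniformity in $n$ of the constants: your linear growth bound $|f_n(t,x)|\vee|g_n(t,x)|\le\hat K(1+|x|)$ requires both a common Lipschitz constant $K$ and $\sup_n\sup_t\bigl(|f_n(t,0)|\vee|g_n(t,0)|\bigr)<\infty$, and the latter does not follow from ``each $f_n$ is almost periodic'' plus pointwise convergence alone; it must be read into condition \textbf{(H)} for the family (as is intended, and as holds automatically in the paper's only use of the proposition, where $f_n=f(\cdot+\alpha_n,\cdot)$, $g_n=g(\cdot+\alpha_n,\cdot)$ are time-translates of fixed uniformly almost periodic coefficients). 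With that reading fixed, the tightness estimate, the bounded passage to the limit in $E\bigl[H\,(M_n^{\phi}(t)-M_n^{\phi}(s))\bigr]=0$ (using equi-Lipschitzness to upgrade $f_n\to f$, $g_n\to g$ to convergence along the random arguments, and compact support of $\nabla\phi$, $D^2\phi$ for domination), and the final Fatou step are all sound.
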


\begin{prop}[\cite{LW}, Lemma $4.1$]\label{im}
Consider SDE \eqref{tagA001} with coefficients satisfying condition {\bf (H)}.
If SDE \eqref{tagA001} admits an $L^2$-bounded solution $X(t)$ on $\R$ which is
asymptotically almost periodic in distribution on $\R_+$, then SDE \eqref{tagA001} admits a solution
$Y$ on $\R$ which is
almost periodic in distribution such that
\[
\lim_{t\to+\infty} \rho (\mathcal{L}(X(t)), \mathcal{L}(Y(t))) =0 \quad \hbox{and} \quad \sup_{t\in\mathbb{R}}\mathbf{E}|Y(t)|^2 \le \sup_{t\in\mathbb{R}}\mathbf{E}|X(t)|^2.
\]
In particular, $\mathcal{L}(Y)$ is the almost periodic part of $\mathcal{L}(X)$.  The similar
result holds when $X$ is asymptotically
almost periodic in distribution on $\R_-$.
\end{prop}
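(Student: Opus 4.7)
The plan is to produce $Y$ in two stages: first shift by a sequence $t_n\to+\infty$ to extract a ``limit solution'' $\tilde X$ of a limiting SDE in the hull whose distribution coincides with the almost periodic part of $\mathcal{L}(X(\cdot))$, and then translate back along a second sequence $s_n\to+\infty$ that carries the limiting SDE (and the hull shift of the almost periodic part) simultaneously back to the original data $(f,g)$ and to $\mu$. Both stages will rely on Proposition~\ref{key}.

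In the first stage, let $\mu(\cdot)$ denote the almost periodic part of $\mathcal{L}(X(\cdot))$ on $\R_+$ guaranteed by hypothesis, and set $r:=\|X\|_\infty$. Given a sequence $t_n\to+\infty$, I would use Proposition~\ref{uap-p2}(iii) on the uniformly almost periodic coefficients together with the $\mathcal{P}(\R^d)$-valued Bochner criterion applied to $\mu$ to extract, by a diagonal procedure, a subsequence along which $T_{t_n}(f,g)=(\tilde f,\tilde g)$ uniformly on compact sets and $T_{t_n}\mu=\tilde\mu$ uniformly on $\R$. The shifted processes $X(\cdot+t_n)$ solve the SDEs with coefficients $(f_{t_n},g_{t_n})$ (driven by a suitably translated Brownian motion), lie in $\mathcal{B}_r^{(f_{t_n},g_{t_n})}$, and by Proposition~\ref{key} a further subsequence converges in distribution, uniformly on compact intervals, to some $\tilde X\in\mathcal{B}_r^{(\tilde f,\tilde g)}$. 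Since $\rho(\mathcal{L}(X(t+t_n)),\mu(t+t_n))\to 0$ by the definition of the almost periodic part while $\mu(t+t_n)\to\tilde\mu(t)$, the identification $\mathcal{L}(\tilde X(t))=\tilde\mu(t)$ will hold for every $t\in\R$.

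For the second stage, since $(\tilde f,\tilde g)\in H(f,g)$ and $\tilde\mu\in H(\mu)$, Proposition~\ref{uap-p2}(ii) and another diagonal extraction produce a sequence $s_n\to+\infty$ with $T_{s_n}(\tilde f,\tilde g)=(f,g)$ and $T_{s_n}\tilde\mu=\mu$. Applying Proposition~\ref{key} to $\tilde X(\cdot+s_n)$ once more and passing to a subsequence, I obtain a limit $Y\in\mathcal{B}_r^{(f,g)}$ satisfying $\mathcal{L}(Y(t))=\mu(t)$ for every $t$; hence $Y$ is almost periodic in distribution. The asymptotic relation $\rho(\mathcal{L}(X(t)),\mathcal{L}(Y(t)))=\rho(\mathcal{L}(X(t)),\mu(t))\to 0$ then follows immediately from the definition of the almost periodic part, and the inequality $\sup_t\mathbf{E}|Y(t)|^2\le\sup_t\mathbf{E}|X(t)|^2$ comes from lower semicontinuity of $x\mapsto|x|^2$ under weak convergence, propagated through each of the two limit passages by a standard truncation-plus-Fatou argument.

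The main technical hurdle is the joint extraction: a single subsequence must simultaneously realize the hull shift of the coefficients, the hull shift of the candidate almost periodic distribution, and the weak limit of the process provided by Proposition~\ref{key}, since otherwise the two identifications $\mathcal{L}(\tilde X)=\tilde\mu$ and $\mathcal{L}(Y)=\mu$ can drift apart. Once these three limiting operations are aligned on a common subsequence, the rest of the argument is a clean combination of Proposition~\ref{key} with elementary properties of the metric $\rho$ and of almost periodic functions valued in complete metric spaces.
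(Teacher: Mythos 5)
The paper does not prove this statement; it is imported verbatim from \cite{LW} (Lemma 4.1 there), so there is no internal proof to compare against. Your reconstruction is correct: the two limit passages are both legitimate applications of Proposition \ref{key}, the identification $\mathcal{L}(\tilde X(t))=\tilde\mu(t)$ follows exactly as you say from $t+t_n\to+\infty$, and the joint-extraction issue you flag is genuinely the only delicate point --- it is resolved by treating $t\mapsto\bigl((f(t,\cdot),g(t,\cdot)),\mu(t)\bigr)$ as a single almost periodic function into a product metric space, so that the return sequence (e.g.\ $s_n=-t_n$, or an $s_n\to+\infty$ supplied by Proposition \ref{uap-p2}(ii) applied to the product-valued function) carries coefficients and distribution back simultaneously. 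The $L^2$ bound is even immediate from Proposition \ref{key}, since it already places the limit in $\mathcal{B}_r^{(f,g)}$ with $r=\|X\|_\infty$. The one structural remark: your detour through the hull and back can be collapsed into a single stage by choosing $\alpha_n\to+\infty$ that are simultaneously $\tfrac1n$-almost periods of $(f,g)$ and of the almost periodic part $\mu$, so that $T_\alpha(f,g)=(f,g)$ and $T_\alpha\mu=\mu$ directly; one application of Proposition \ref{key} to $X(\cdot+\alpha_n)$ then yields $Y\in\mathcal{B}_r^{(f,g)}$ with $\mathcal{L}(Y)=\mu$, which is the shorter route the cited source takes. Either way the argument is sound.
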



Consider \eqref{tagA001} and let $V$ satisfy condition {\bf (L)}. For
$t\in\mathbb{R}$ and $x,y\in\mathbb{R}^d$, denote
\begin{equation}\label{mathscr01}
\begin{split}
\mathscr{L}V(t,x-y):= & \frac{\partial V}{\partial t}(t,x-y)+\sum_{i=1}^d\frac{\partial V}{\partial x_i}(t,x-y)(f_i(t,x)-f_i(t,y))\\
& +\frac{1}{2}\sum_{l=1}^m\sum_{i,j=1}^d(g_{il}(t,x)-g_{il}(t,y))\frac{\partial^2 V}{\partial x_i\partial x_j}(t,x-y)\\
& \cdot (g_{jl}(t,x)-g_{jl}(t,y))).
\end{split}
\end{equation}

Now we give a sufficient condition to the uniform stability in distribution we defined in Definition
\ref{h}:

\begin{thm}\label{oldfind01}
Suppose that \eqref{tagA001}'s coefficients satisfy condition {\bf (H)} and there is a function
$V(\cdot,\cdot)$ satisfying condition {\bf (L)}. Assume that there exists
some constant $b>0$ such that for all $(t,x)\in\mathbb{R}\times\mathbb{R}^d$,
\begin{equation}\label{oldfind02}
V(t,x)\leq b|x|^2,
\end{equation}
\begin{equation}\label{oldfind03}
\mathscr{L}V(t,x-y)\leq 0.
\end{equation}
Then if $\mathcal{D}^{\eqref{tagA001}}_r\neq\emptyset$ for some $r>0$, all the elements of it
are uniformly stable within $\mathcal{D}^{\eqref{tagA001}}_r$;  if the number of these elements
is finite, all of these elements are almost periodic.
\end{thm}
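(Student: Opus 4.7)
The plan is to prove uniform stability by an It\^o-formula computation on $V(t,X(t)-Y(t))$, then combine it with finiteness of $\mathcal D_r^{(f,g)}$ to verify Bochner's criterion for each $\mu_i$.

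\textbf{Uniform stability.} Fix $\mu,\eta\in\mathcal D_r^{(f,g)}$ realized by $X,Y\in\mathcal B_r^{(f,g)}$. Since {\bf (H)} supplies a global Lipschitz bound, pathwise uniqueness holds, and I would place $X,Y$ on a common filtered space driven by a single Brownian motion, choosing the coupling at time $t_1$ so that its marginals are $\mu(t_1),\eta(t_1)$ and $\mathbf E|X(t_1)-Y(t_1)|^2$ is as small as desired once $\rho(\mu(t_1),\eta(t_1))$ is small --- the uniform $L^2$-bound on $\mathcal B_r$ forces uniform square-integrability, making $\rho$ and the Wasserstein-$2$ metric topologically equivalent on $\mathcal D_r$. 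Applying It\^o's formula to $V(t,X(t)-Y(t))$, the drift is $\mathscr LV(t,X(t)-Y(t))\le 0$ by \eqref{oldfind03}, the stochastic integral is a true martingale by {\bf (L)} together with the $L^2$-bound, and \eqref{oldfind02} gives
\[
\mathbf EV(t,X(t)-Y(t))\le \mathbf EV(t_1,X(t_1)-Y(t_1))\le b\,\mathbf E|X(t_1)-Y(t_1)|^2,\quad t\ge t_1.
\]
To return to a $\rho$-bound on $\mu(t),\eta(t)$, I would truncate: split $\mathbf E|X-Y|^2$ across $\{|X-Y|\le r_0\}$, $\{r_0<|X-Y|\le R\}$, and $\{|X-Y|>R\}$, bounding the first by $r_0^2$, the third by a Markov estimate using the $L^2$-bound, and the middle by $(\mathbf EV)/m(r_0,R)$ with $m(r_0,R):=\inf\{V(t,x):t\in\R,\,r_0\le|x|\le R\}>0$; positivity of $m(r_0,R)$ comes from the pointwise positivity in {\bf (L)} together with the compactness of $t$-shifts of $V$ inherited from the uniformly almost periodic framework.

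\textbf{Finite set $\Rightarrow$ almost periodic.} Write $\mathcal D_r^{(f,g)}=\{\mu_1,\ldots,\mu_n\}$ and fix $\mu_i$. By Bochner's criterion in the complete metric space $(\mathcal P(\R^d),\rho)$, it suffices to show that every sequence $\alpha'$ has a subsequence $\alpha$ along which $\mu_i(\cdot+\alpha_k)$ converges uniformly on $\R$. I would first use Proposition~\ref{uap-p2}(iii) to extract $\alpha\subset\alpha'$ with $T_\alpha(f,g)=(\tilde f,\tilde g)\in H(f,g)$, then refine via Proposition~\ref{key} so that $\mu_i(\cdot+\alpha_k)\to\tilde\nu$ uniformly on compact intervals, with $\tilde\nu\in\mathcal D_r^{(\tilde f,\tilde g)}$. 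Because $(f,g)\in H(\tilde f,\tilde g)$ (Proposition~\ref{uap-p2}(i)), running the same construction along a sequence realizing $(f,g)$ from $(\tilde f,\tilde g)$ produces an injection $\mathcal D_r^{(\tilde f,\tilde g)}\hookrightarrow\mathcal D_r^{(f,g)}$, so the hull system also has at most $n$ elements.

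To upgrade the compact-uniform convergence to uniform convergence on $\R$, suppose it fails: there exist $\varepsilon>0$ and $t_k\in\R$ with $|t_k|\to\infty$ and $\rho(\mu_i(t_k+\alpha_k),\tilde\nu(t_k))\ge\varepsilon$. A diagonal extraction arranges $T_{\alpha_k+t_k}(f,g)=T_{t_k}(\tilde f,\tilde g)=:(\hat f,\hat g)\in H(f,g)$ together with limits $\nu_1:=\lim\mu_i(\cdot+\alpha_k+t_k)$ and $\nu_2:=\lim\tilde\nu(\cdot+t_k)$ in $\mathcal D_r^{(\hat f,\hat g)}$ (uniformly on compact intervals), satisfying $\rho(\nu_1(0),\nu_2(0))\ge\varepsilon$, so $\nu_1\ne\nu_2$. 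Uniform stability in the $(\hat f,\hat g)$-system combined with finiteness of $\mathcal D_r^{(\hat f,\hat g)}$ then forces $\nu_1=\nu_2$, a contradiction. \textbf{The main obstacle} is exactly this closing contradiction --- reconciling the failure of global uniform convergence with finiteness while juggling two parallel diagonal extractions and the forward-in-time nature of the stability estimate. A secondary issue, handled by the truncation above, is the passage from smallness of $\mathbf EV$ to smallness of $\rho$ without a coercivity assumption on $V$.
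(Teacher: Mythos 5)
Your overall architecture (It\^o/Lyapunov supermartingale for stability, hull extraction for almost periodicity) matches the paper's, but there are two genuine gaps. First, the coupling step in the stability part rests on the claim that $\rho$ and the Wasserstein-$2$ metric are topologically equivalent on $\mathcal D_r$ because of ``uniform square-integrability.'' A uniform bound $\mathbf E|X|^2\le r^2$ does \emph{not} make the squares uniformly integrable: the laws $\mu_n=(1-1/n)\delta_0+(1/n)\delta_{r\sqrt n}$ lie in $\mathcal D_r$, converge weakly to $\delta_0$, yet $W_2(\mu_n,\delta_0)\equiv r$. So you cannot conclude that $\mathbf E|X(t_1)-Y(t_1)|^2$ is small from $\rho(\mu(t_1),\eta(t_1))$ being small, and the very first inequality $\mathbf E V(t,\cdot)\le b\,\mathbf E|X(t_1)-Y(t_1)|^2$ has no usable right-hand side. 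The paper's proof avoids exactly this: it applies Jensen to show $\sqrt{V(t,X_n(t)-X(t))}$ is itself a supermartingale, so only $\mathbf E|X_n(t_1)-X(t_1)|$ (a $W_1$-type quantity) is needed, and that \emph{does} tend to zero along a Skorohod-represented sequence because the $L^2$-bound gives uniform integrability of the first moments. (Relatedly, your assertion that the stochastic integral is a true martingale ``by {\bf (L)}'' overlooks that {\bf (L)} only bounds $DV$ on $\R\times S$ for compact $S$; the paper localizes with stopping times and passes to the limit by Fatou.)

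Second, the closing step of your almost-periodicity argument --- ``uniform stability in the $(\hat f,\hat g)$-system combined with finiteness of $\mathcal D_r^{(\hat f,\hat g)}$ then forces $\nu_1=\nu_2$'' --- is a non sequitur, as you yourself half-concede: finiteness permits several distinct, uniformly stable elements, so producing two limits with $\rho(\nu_1(0),\nu_2(0))\ge\varepsilon$ is not by itself a contradiction. What is actually needed, and what the paper supplies, is a \emph{separation} statement: using uniform stability and finiteness it extracts a constant $d(f,g)>0$ such that any two distinct elements satisfy $\inf_{t\in\R_-}\rho(\eta(t),\mu(t))>d(f,g)$, shows (by the two-way injection you sketch) that every hull equation has the same number of elements and the same separating constant, and then feeds the resulting dichotomy ``$T_{\beta+\delta}\mu\equiv T_{\gamma+\delta}\mu$ or they are uniformly $d(f,g)$-separated'' into the Bochner-type criterion of Lemma \ref{uselessmust01}(ii) to conclude asymptotic almost periodicity on $\R_-$, finishing with Proposition \ref{im} and the separation once more to identify $\mu$ with its almost periodic part. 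Without establishing that uniform lower bound on the distance between distinct elements (and its invariance across the hull), your double diagonal extraction cannot be closed.
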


\begin{proof}
\medskip
\noindent {\bf Step\ 1. Uniform stability.}  If there is some
$\mu(t)\in\mathcal{D}^{\eqref{tagA001}}_r$ which is not uniformly stable on $[t_0,+\infty)$
within $\mathcal{D}^{\eqref{tagA001}}_r$ for some $t_0\in\mathbb{R}$, then there is a sequence
$\mu_n(t)\in\mathcal{D}^{\eqref{tagA001}}_r$
such that $\rho(\mu_n(t_0),\mu(t_0))\rightarrow 0$ and there are $t_n\geq t_0$ such that
\begin{equation}\label{claim01}
\inf_n\rho(\mu_n(t_n),\mu(t_n))\geq \epsilon_0.
\end{equation}
By Skorohod representation theorem, there exist suitable random variables $\hat{X}_n$,
$\hat{X}$ such that $\mathcal{L}(\hat{X}_n)=\mu_n(t_0)$, $\mathcal{L}(\hat{X})=\mu(t_0)$ and
$\hat{X}_n\xrightarrow{a.s.}\hat{X}$. By the global Lipschitz condition of coefficients, there
exist strong solutions $X_n(t), X(t)\in\mathcal{B}^{\eqref{tagA001}}_r$ for given Brownian motion
$W$ such that $X_n(t_0)=\hat{X}_n$, $X(t_0)=\hat{X}$, and $\mathcal{L}(X(t))=\mu(t)$,
$\mathcal{L}(X_n(t))=\mu_n(t)$.

We want to prove that $\rho(\mu_n(t_n),\mu(t_n))\rightarrow 0$ and hence
get contradiction to \eqref{claim01}.  It suffices to prove that $X_n(t)$ uniformly converge to $X(t)$ in probability
on $[t_0,+\infty)$, that is, for every $\epsilon>0$, when $n$ is large enough,
\begin{equation}\label{proba010}
\mathbf{P}\{\sup_{t\geq t_0}|X_n(t)-X(t)|\geq \epsilon\}<\epsilon.
\end{equation}

Firstly, we prove that $V(t,X_{n}(t)-X(t))$ is a supermartingale on $[t_0,+\infty)$ for each $n$.
For $t\geq t_0$, we have
\begin{equation*}
\begin{split}
X_n(t)-X(t)= & \hat{X}_n-\hat{X}+\int_{t_0}^tf(s,X_n(s))-f(s,X(s))\rmd s\\
& +\int_{t_0}^tg(s,X_n(s))-g(s,X(s))\rmd W(s).
\end{split}
\end{equation*}
For every $\epsilon>0$, let
\begin{equation}\label{defineused01}
V_{\epsilon}:=\inf_{|x|\geq \epsilon,t\geq t_0}V(t,x).
\end{equation}
By \eqref{whether01} we can see $V_{\epsilon}>0$.
For $t_0\leq s< t<+\infty$, and every $k,n\in \mathbb{N}$, we define a stopping time
$$\tau^n_k:=\inf\{t\geq s:|X_n(t)|\vee|X(t)|>k\}\text{.}$$
By It\^o's formula,
\begin{equation*}
\begin{split}
& V(\tau^{n}_k\wedge t,X_{n}(\tau^{n}_k\wedge t)-X(\tau^{n}_k\wedge t))\\
= & V(s,X_{n}(s)-X(s))+\int_{s}^{\tau^{n}_k\wedge t}\mathscr{L}V(u,X_{n}(u)-X(u))\rmd u\\
& +\int_s^{\tau^{n}_k\wedge t}\sum_{i=1}^m\sum_{j=1}^d[g_{ji}(u,X_{n}(u))-g_{ji}(u,X(u))]\frac{\partial V}{\partial x_j}(u,X_{n}(u)-X(u))\rmd W_i(u).
\end{split}
\end{equation*}
Then we have
$$\mathbf{E}(\int_s^{\tau^{n}_k\wedge t}\sum_{i=1}^m\sum_{j=1}^d[g_{ji}(u,X_{n}(u))-g_{ji}(u,X(u))]\frac{\partial V}{\partial x_j}(u,X_{n}(u)-X(u))\rmd W_i(u)|\mathcal{F}_s)=0\text{ a.s..}$$
By \eqref{oldfind03},
\begin{equation*}
\begin{split}
& \mathbf{E}(V(\tau^{n}_k\wedge t,X_{n}(\tau^{n}_k\wedge t)-X(\tau^{n}_k\wedge t))|\mathcal{F}_s)\\
= & \mathbf{E}(V(s,X_{n}(s)-X(s))|\mathcal{F}_s)+\mathbf{E}(\int_{s}^{\tau^{n}_k\wedge t}\mathscr{L}V(u,X_{n}(u)-X(u))\rmd u|\mathcal{F}_s).
\end{split}
\end{equation*}
Since $\mathcal{L}V(t,x)\leq 0$ on $\mathbb{R}\times\mathbb{R}^d$  and $V(s,X_{n}(s)-X(s))$ is $\mathcal{F}_s$-measurable, we get
\begin{equation}\label{result01}
\begin{split}
\mathbf{E}(V(\tau^{n}_k\wedge t,X_{n}(\tau^{n}_k\wedge t)-X(\tau^{n}_k\wedge t))|\mathcal{F}_s)\leq & \mathbf{E}(V(s,X_{n}(s)-X(s))|\mathcal{F}_s)\\
= & V(s,X_{n}(s)-X(s))\text{, a.s..}
\end{split}
\end{equation}
Because $V(t,x)$ is $C^2 $ in $t$, $\tau^{n}_k\xrightarrow{a.s.}+\infty$ as $k\rightarrow+\infty$
for every $n$, by Fatou's lemma we have:
\begin{equation}\label{mar}
\begin{split}
\mathbf{E}(V(t,X_n(t)-X(t))|\mathcal{F}_s)= & \mathbf{E}(\liminf_{k\rightarrow+\infty}(V(\tau^{n}_k\wedge t,X_{n}(\tau^{n}_k\wedge t)-X(\tau^{n}_k\wedge t))|\mathcal{F}_s)\\
\leq & \liminf_{k\rightarrow+\infty}\mathbf{E}V(\tau^{n}_k\wedge t,X_{n}(\tau^{n}_k\wedge t)-X(\tau^{n}_k\wedge t))|\mathcal{F}_s)\\
\leq & V(s,X_{n}(s)-X(s))\text{, a.s..}
\end{split}
\end{equation}
So $V(t,X_{n}(t)-X(t))$ is a supermartingale on $[t_0,+\infty)$.

Now we want to prove that $\mathbf{E}\sqrt{V(t_0,\hat{X}_n-\hat{X})}$ is sufficiently small when
$n$ is large enough. By Jensen's inequality and \eqref{mar} we have
\begin{equation}\label{Jen1}
\begin{split}
\mathbf{E}(\sqrt{V(t,X_n(t)-X(t))}|\mathcal{F}_s)\leq & \sqrt{\mathbf{E}(V(t,X_n(t)-X(t))|\mathcal{F}_s)}\\
\leq & \sqrt{V(s,X_n(s)-X(s))}\text{, a.s..}
\end{split}
\end{equation}
That is, $\sqrt{V(t,X_{n}(t)-X(t))}$ is a supermartingale. So by the martingale inequality we
have
\begin{equation}\label{oldfinda}
\begin{split}
\mathbf{P}\{\sup_{t\in[t_0,+\infty)}|X_n(t)-X(t)|\geq \epsilon\}\leq & \mathbf{P}\{\sup_{t\in[t_0,+\infty)}\sqrt{V(t,X_n(t)-X(t))}\geq\sqrt{V_{\epsilon}}\}\\
\leq & \frac{\mathbf{E}\sqrt{V(t_0,\hat{X}_n-\hat{X})}}{\sqrt{V_{\epsilon}}}.
\end{split}
\end{equation}
Note that $\hat{X}_n\xrightarrow{a.s.}\hat{X}$ and
$\sup_n\mathbf{E}|\hat{X}_n|^2\leq r^2$, we have (cf. \cite[Theorems $4.5.2$, $4.5.4$]{Ch}):
$$\mathbf{E}|\hat{X}_n|\rightarrow \mathbf{E}|\hat{X}|\text{, as $n\rightarrow+\infty$,}$$
and
$$\lim_{n\rightarrow+\infty}\mathbf{E}|\hat{X}_n-\hat{X}|=0.$$
By \eqref{oldfind02}, we have
$$\frac{\mathbf{E}\sqrt{V(t_0,\hat{X}_n-\hat{X})}}{\sqrt{V_{\epsilon}}}\leq\frac{\sqrt{b}\mathbf{E}|\hat{X}_n-\hat{X}|}{\sqrt{V_{\epsilon}}},$$
which implies that, if $n$ is large enough such that
$$\mathbf{E}|\hat{X}_n-\hat{X}|<\frac{\epsilon\sqrt{V_{\epsilon}}}{\sqrt{b}},$$
we will have \eqref{proba010}. Thus
$$\sup_{t\geq t_0}\rho(\mu_n(t),\mu(t))\rightarrow 0,$$
which is contradictory to \eqref{claim01}. Thus each element of $\mathcal{D}^{\eqref{tagA001}}_r$ is uniformly stable within
$\mathcal{D}^{\eqref{tagA001}}_r$.

\medskip
\noindent {\bf Step\ 2.  Inherited property and a.a.p.}\  Now we want to prove that the
consequence of step $1$ is also valid for all the hull equations.

Let the sequence $\alpha'$ be such
that $(T_{\alpha'}f,T_{\alpha'}g)$ unifromly exists on $\R\times S$ for any compact set $S\subset \R^d$.
Since $V$, $V_t$, $V_{x_i}$ are bounded on $\mathbb{R}\times S$,
$V(t+\alpha'_n,x)$ are uniformly bounded and equi-continuous on $I\times S$ for any compact interval
$I\subset\mathbb{R}$. By Arzela-Ascoli's
theorem, there is suitable subsequence $\alpha\subset\alpha'$ such that $T_{\alpha}V(t,x)$ exists
uniformly on $I\times S$. By the diagonalization argument, the $\alpha$ could be chosen such that
$T_{\alpha}V$ exists uniformly on any compact subset of $\mathbb{R}\times\mathbb{R}^d$.

Similarly we can extract further subsequence from $\alpha$, which we still denote by $\alpha$ itself,
such that $T_{\alpha}V_t$, $T_{\alpha}V_{x_i}$,
$T_{\alpha}V_{x_ix_j}$ exist uniformly on compact subsets of $\mathbb{R}\times\mathbb{R}^d$. More precisely,
we have
\begin{equation}\label{nextuse}
\frac{\partial T_{\alpha}V}{\partial t}=T_{\alpha}V_t\text{, $\frac{\partial T_{\alpha}V}{\partial x_i}=T_{\alpha}V_{x_i}$, $\frac{\partial^2 T_{\alpha}V}{\partial x_i\partial x_j}=T_{\alpha}V_{x_ix_j}$, for $i,j=1,\cdots,d$, on $\mathbb{R}\times\mathbb{R}^d$.}
\end{equation}
So we have
\begin{equation}\label{limit01}
T_{\alpha}V(t,x)\leq b|x|^2,
\end{equation}
\begin{equation}\label{limit02}
\begin{split}
\mathscr{L}T_{\alpha}V(t,x-y)= & \frac{\partial T_{\alpha}V}{\partial t}(t,x-y)+\sum_{i=1}^d\frac{\partial T_{\alpha}V}{\partial x_i}(t,x-y)(T_{\alpha}f_i(t,x)-T_{\alpha}f_i(t,y))\\
& +\frac{1}{2}\sum_{l=1}^m\sum_{i,j=1}^d(T_{\alpha}g_{il}(t,x)-T_{\alpha}g_{il}(t,y))\frac{\partial^2 T_{\alpha}V}{\partial x_i\partial x_j}(t,x-y)\\
& \cdot(T_{\alpha}g_{jl}(t,x)-T_{\alpha}g_{jl}(t,y)))\leq 0
\end{split}
\end{equation}
for all $(t,x,y)\in\mathbb{R}\times\mathbb{R}^d\times\mathbb{R}^d$.
Repeating Step $1$,  we obtain that all the elements of $\mathcal{D}^{(T_{\alpha}f,T_{\alpha}g)}_r$ are
uniformly stable within $\mathcal{D}^{(T_{\alpha}f,T_{\alpha}g)}_r$.

By the uniform stability and the finiteness of the set $\mathcal{D}_r^{\eqref{tagA001}}$, there is a separating constant $d(f,g)$, depending only on $(f,g)$
but independent of $\mu\in\mathcal{D}_r^{\eqref{tagA001}}$, such that for any two different elements
$\eta(t),\mu(t)\in\mathcal{D}^{\eqref{tagA001}}_r$ we have
\begin{equation}\label{sep}
\inf_{t\in\mathbb{R}_-}\rho(\eta(t),\mu(t))> d(f,g).
\end{equation}
By Proposition \ref{uap-p2}-(ii), we may assume with loss of generality that the above sequence $\alpha$
satisfies $\lim_{n\to\infty} \alpha_n =-\infty$, so it follows from \eqref{sep} that
\[
\inf_{t\in\mathbb{R}_-}\rho(T_\alpha\eta(t),T_\alpha \mu(t))\ge d(f,g).
\]
On the other hand, it follows from Proposition \ref{key} that
$T_{\alpha}\mu(t)\in\mathcal{D}^{(T_{\alpha}f,T_{\alpha}g)}_r$,
so $\mathcal{D}^{(T_{\alpha}f,T_{\alpha}g)}_r$ has no less elements than
$\mathcal{D}^{\eqref{tagA001}}_r$ does.

Conversely, by Proposition \ref{uap-p2}-(i), $(T_{\alpha}f,T_{\alpha}g)$ is uniformly almost periodic and $(f,g)\in H(T_{\alpha}f,T_{\alpha}g)$.
So, by the same symmetric argument as above, $\mathcal{D}^{\eqref{tagA001}}_r$ also has no less elements than
$\mathcal{D}^{(T_{\alpha}f,T_{\alpha}g)}_r$ does and the separating constant $d(T_\alpha f, T_\alpha g) \le d (f,g)$. That is,
all the equations in the hull $H(f,g)$ share the same number of elements as $\mathcal{D}^{\eqref{tagA001}}_r$ and the same separating constant $d(f,g)$.

Now we prove that all the elements of $\mathcal{D}^{\eqref{tagA001}}_r$ are a.a.p..
For the above sequence $\alpha$ with  $\alpha_n\rightarrow-\infty$ and given sequence $\delta=\{\delta_n\}$ with $\delta_n<0$, by Proposition \ref{uap-p2}-(iii)
there exist suitable subsequences which we denote as themselves such that $(T_{\alpha+\delta}f,T_{\alpha+\delta}g)$
exists uniformly on $\R\times S$ for any compact set $S\subset\R^d$.
By Arzela-Ascoli's theorem there are subsequences $\beta,\gamma\subset\alpha$ such that
$T_{\beta+\delta}\mu(t)$, $T_{\gamma+\delta}\mu(t)$ exist uniformly on compact intervals (see the proof of  \cite[Theorem 3.1]{LW} for details).
By Proposition \ref{key},
$T_{\beta+\delta}\mu(t),T_{\gamma+\delta}\mu(t)\in\mathcal{D}^{(T_{\alpha+\delta}f,T_{\alpha+\delta}g)}_r$, then
by the separating property obtained above we have
$$
T_{\beta+\delta}\mu(t)\equiv T_{\gamma+\delta}\mu(t)\text{ or }\inf_{t\in\mathbb{R}_-}\rho(T_{\beta+\delta}\mu(t),T_{\gamma+\delta}\mu(t))\geq d(f,g).
$$
Then it follows from Lemma \ref{uselessmust01} that all the elements of $\mathcal{D}^{\eqref{tagA001}}_r$ are all a.a.p. on $\mathbb{R}_-$.

By Proposition \ref{im}, there
is some $\hat{\mu}(t)\in\mathcal{D}_r^{\eqref{tagA001}}$, which is almost periodic and satisfies
$$
\lim_{t\rightarrow-\infty}\rho(\mu(t),\hat{\mu}(t))=0.
$$
By the separating property, $\hat{\mu}(t)=\mu(t)$, which implies that each element of
$\mathcal{D}_r^{\eqref{tagA001}}$ is almost periodic. The proof is complete.
\end{proof}

The following result, which limits the number of $\mathcal{D}^{(f,g)}_r$'s elements to be one, is an important special case of
Theorem \ref{oldfind01} and is more convenient for use in applications.

\begin{thm}\label{stab01}
Suppose that \eqref{tagA001}'s coefficients satisfy condition {\bf (H)}. Assume that there is a
function $V(\cdot,\cdot)$ satisfying condition {\bf (L)}, and there are constants
$a,b>0$ such that
\begin{equation}\label{tagh_0}
a|x|^2\leq V(t,x)\leq b|x|^2 \quad \hbox{for all } (t,x)\in\mathbb{R}\times\mathbb{R}^d.
\end{equation}
Assume that there is some positively definite function $c(\cdot):\mathbb{R}_+\rightarrow\mathbb{R}_+$
which is convex, increasing on $\mathbb{R}_+$, and
\begin{equation}\label{tagh_1}
\mathscr{L}V(t,x-y)\leq-c(|x-y|^2)\quad \text{ $\forall t\in\mathbb{R}$, $\forall x,y\in\mathbb{R}^d$.}
\end{equation}
Then if $\mathcal{D}^{\eqref{tagA001}}\neq\emptyset$, it has a unique element which is almost
periodic.
\end{thm}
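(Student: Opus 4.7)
The plan is to reduce Theorem \ref{stab01} to Theorem \ref{oldfind01} by showing that $\mathcal{D}^{(f,g)}$ contains \emph{at most} one element. Condition \eqref{tagh_0} implies \eqref{oldfind02} and \eqref{tagh_1} is strictly stronger than \eqref{oldfind03}, so once uniqueness is established Theorem \ref{oldfind01} (applied with a finite family of cardinality one) immediately gives almost periodicity of the unique element. Everything therefore reduces to proving $|\mathcal{D}^{(f,g)}|\le 1$.

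To obtain uniqueness I would argue by contradiction. Suppose $\mu,\nu\in\mathcal{D}^{(f,g)}_r$ are distinct, so $\mu(t_0)\ne\nu(t_0)$ for some $t_0\in\mathbb{R}$. Since the second moments of $\mu(t_0),\nu(t_0)$ are each at most $r^2$, elementary properties of the quadratic Wasserstein distance furnish a constant $\delta_0>0$ such that any random pair $(\xi,\eta)$ with marginals $\mu(t_0),\nu(t_0)$ satisfies $\mathbf{E}|\xi-\eta|^2\ge\delta_0^2$. The strategy is, for each large $n$, to manufacture via the SDE a specific coupling of $\mu$ and $\nu$ on a single probability space, and then use the strict dissipativity \eqref{tagh_1} to drive the total dissipation $\int_{-n}^{t_0}\mathbf{E}c(\cdot)\,\rmd u$ to $+\infty$, while the supermartingale-type estimate from \eqref{tagh_0} forces it to stay bounded.

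Concretely, for each $n$ I would use the Lipschitz hypothesis {\bf (H)} (which yields strong existence and pathwise uniqueness, hence uniqueness in law) to set up, on one filtered probability space, a Brownian motion $W$ and initial variables $\xi_n\sim\mu(-n)$, $\eta_n\sim\nu(-n)$, and then solve \eqref{tagA001} forward from time $-n$ to obtain strong solutions $X_n,Y_n$ with $\mathcal{L}(X_n(t))=\mu(t)$ and $\mathcal{L}(Y_n(t))=\nu(t)$ for all $t\ge -n$. It\^o's formula applied to $V(t,X_n(t)-Y_n(t))$, using the localizing stopping times $\tau^n_k$ as in the proof of Theorem \ref{oldfind01} but now with the strict inequality \eqref{tagh_1} in place of \eqref{oldfind03}, yields
\begin{equation*}
\phi_n(t)+\int_{s}^{t}\mathbf{E}\,c\bigl(|X_n(u)-Y_n(u)|^2\bigr)\,\rmd u\le \phi_n(s),\qquad -n\le s\le t,
\end{equation*}
where $\phi_n(t):=\mathbf{E}V(t,X_n(t)-Y_n(t))$; in particular $\phi_n$ is non-increasing on $[-n,\infty)$ and $\phi_n(-n)\le b\,\mathbf{E}|\xi_n-\eta_n|^2\le 4br^2$ by \eqref{tagh_0}.

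The contradiction will come from the Wasserstein lower bound and the convexity of $c$. Because $(X_n(t_0),Y_n(t_0))$ couples $\mu(t_0)$ and $\nu(t_0)$, $\phi_n(t_0)\ge a\mathbf{E}|X_n(t_0)-Y_n(t_0)|^2\ge a\delta_0^2$, and monotonicity of $\phi_n$ propagates this lower bound to all $t\in[-n,t_0]$. Combined with \eqref{tagh_0}, this gives $\mathbf{E}|X_n(t)-Y_n(t)|^2\ge a\delta_0^2/b$ on that interval, and Jensen's inequality together with the monotonicity of $c$ yields $\mathbf{E}c(|X_n(t)-Y_n(t)|^2)\ge c(a\delta_0^2/b)>0$ there. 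Integrating between $-n$ and $t_0$ then forces $(t_0+n)\,c(a\delta_0^2/b)\le 4br^2$, absurd for $n$ large. Hence $\mathcal{D}^{(f,g)}$ is a singleton and Theorem \ref{oldfind01} completes the proof. The main obstacle I anticipate is the coupling step: producing strong solutions on a common probability space whose marginal laws are exactly $\mu$ and $\nu$, which hinges on the pathwise uniqueness from {\bf (H)} to identify the law of the forward strong solution with the prescribed weak solution.
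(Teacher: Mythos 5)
Your proposal is correct, and its engine is the same as the paper's: It\^o's formula applied to $V(t,X-Y)$, the supermartingale bound coming from \eqref{tagh_0}, and Jensen's inequality with the convexity and monotonicity of $c$ to turn the dissipation \eqref{tagh_1} into a quantitative contradiction with the a priori bound $O(br^2)$ on $\mathbf{E}V$. The organization differs in two respects. First, the paper fixes a single pair of strong solutions $X,Y$ on all of $\R$ driven by one Brownian motion and argues in two steps: within any window of length $T(\epsilon)=2br^2/c(\epsilon)+1$ there is a time $t_1$ with $\mathbf{E}|X(t_1)-Y(t_1)|^2<\epsilon$ (else the dissipation drives $\mathbf{E}V$ negative), and then the supermartingale property propagates $\mathbf{E}|X(t)-Y(t)|^2\le b\epsilon/a$ forward from $t_1$; letting the window slide to $-\infty$ and $\epsilon\to 0$ gives $X\equiv Y$. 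You instead run a one-shot energy argument on $[-n,t_0]$ with a sequence of couplings started at time $-n$, which forces you to introduce the quadratic Wasserstein lower bound $\delta_0>0$ (coupling-independent separation at $t_0$) that the paper does not need. Your version is arguably cleaner on the coupling side: the paper simply asserts the existence of two-sided strong solutions on a common space with the prescribed laws, whereas your finite-horizon construction from time $-n$, identified with $\mu$ and $\nu$ via pathwise uniqueness and Yamada--Watanabe, is the more careful justification of that step. The final reduction to Theorem \ref{oldfind01} for almost periodicity of the unique element is exactly the paper's.
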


\begin{proof}
We prove the uniqueness by contradiction.
If there are two elements $\mu(t),\eta(t)\in\mathcal{D}^{\eqref{tagA001}}$, then there's some $r>0$ such that
$\mu(t),\eta(t)\in\mathcal{D}_r^{\eqref{tagA001}}$. Assume that $X(t)$, $Y(t)$ are two strong
$\mathbf{L}^2$-bounded solutions of \eqref{tagA001} for given Brownian motion $W(t)$ such that
$\mathcal{L}(X(t))=\mu(t)$, $\mathcal{L}(Y(t))=\eta(t)$.

For given $\epsilon>0$, let $T(\epsilon)=2br^2/c(\epsilon)+1$. Firstly, we prove that for every $t\in\mathbb{R}$ there is $t_1\in[t,t+T(\epsilon)]$
such that
\begin{equation}\label{time01}
\mathbf{E}|X(t_1)-Y(t_1)|^2<\epsilon.
\end{equation}
If this is not true, then there is $\hat{t}\in\mathbb{R}$ and $\epsilon_0>0$ such
that
$$\inf_{t\in[\hat{t},\hat{t}+T(\epsilon_0)]}\mathbf{E}|X(t)-Y(t)|^2\geq\epsilon_0.$$
Similar to the proof of Theorem \ref{oldfind01}, for given $s\in\mathbb{R}$, we define
$$
\tau_k:=\inf\{t\geq s:|Y(t)|\vee|X(t)|>k\}.
$$
Then it follows from Ito's formula that for $t\geq s$,
\begin{equation*}
\begin{split}
& V(\tau_k\wedge t,X(\tau_k\wedge t)-Y(\tau_k\wedge t))\\
= & \mathbf{E}V(s,X(s)-Y(s))+\int_{s}^{\tau_k\wedge t}\mathbf{E}\mathscr{L}V(u,X(u)-Y(u))\rmd s\\
& +\int_s^{\tau^{n}_k\wedge t}\sum_{i=1}^m\sum_{j=1}^d[g_{ji}(u,X_{n}(u))-g_{ji}(u,X(u))]\frac{\partial V}{\partial x_j}(u,X_{n}(u)-X(u))\rmd W_i(u).
\end{split}
\end{equation*}
Since
$$
\sup_{t\in \mathbb R}\mathbf{E}|X(t)-Y(t)|^2\leq 2r^2,
$$
by \eqref{tagh_0}, \eqref{tagh_1} we have
\begin{equation*}
\begin{split}
& \mathbf{E}V(\tau_k\wedge t,X(\tau_k\wedge t)-Y(\tau_k\wedge t))\\
\leq & b\mathbf{E}|X(s)-Y(s)|^2-\mathbf{E}\int_{s}^{\tau_k\wedge t}c(|X(u)-Y(u)|^2)\rmd s\\
\leq & 2br^2-\mathbf{E}\int_{s}^{\tau_k\wedge t}c(|X(u)-Y(u)|^2)\rmd s.
\end{split}
\end{equation*}
Because $c(r)$ is convex, increasing on $\mathbb{R}_+$, by Jensen's inequality we have:
$$\mathbf{E}\int_{s}^{\tau_k\wedge t}c(|X(u)-Y(u)|^2)\rmd s\geq\int_{s}^{\tau_k\wedge t}c(\mathbf{E}|X(u)-Y(u)|^2)\rmd s\geq c(\epsilon_0)(\tau_k\wedge t-s).$$
So
\begin{equation}\label{time02}
\mathbf{E}V(\tau_k\wedge t,X(\tau_k\wedge t)-Y(\tau_k\wedge t))\leq 2br^2-c(\epsilon_0)(\tau_k\wedge t-s).
\end{equation}
Noting that $\tau_k\xrightarrow{a.s.} +\infty$ as $k\rightarrow+\infty$,
by Fatou's lemma and \eqref{time02} we have
\begin{equation*}
\begin{split}
\mathbf{E}V(t,X(t)-Y(t))& =  \mathbf{E}(\liminf_{k\rightarrow+\infty}V(\tau_k\wedge t,X(\tau_k\wedge t)-Y(\tau_k\wedge t)))\\
& \leq  \liminf_{k\rightarrow+\infty}\mathbf{E}[2br^2-c(\epsilon_0)(\tau_k\wedge t-s)]\\
& \leq  2br^2-c(\epsilon_0)(t-s).
\end{split}
\end{equation*}
Letting $s=\hat{t}$ and $t=\hat{t}+T(\epsilon_0)$, we have
$$0\leq\mathbf{E}V(\hat{t}+T(\epsilon_0),X(\hat{t}+T(\epsilon_0))-Y(\hat{t}+T(\epsilon_0)))\leq 2br^2-c(\epsilon_0)T(\epsilon_0)=-c(\epsilon_0)<0,$$
a contradiction. Thus there is $t_1\in[t,t+T(\epsilon)]$ such that \eqref{time01}
is valid.

For given $s\in\mathbb{R}$, assume $t_1\in[s,s+T(\epsilon)]$ fulfills
\eqref{time01}. By \eqref{tagh_0}--\eqref{time01}, for any $t\geq t_1$,  we have:
$$a\mathbf{E}|X(t)-Y(t)|^2\leq \mathbf{E}V(t,X(t)-Y(t))\leq \mathbf{E}V(t_1,X(t_1)-Y(t_1))\leq b\epsilon.$$
Note that $s\in\mathbb{R}$ is arbitrarily chosen and $T(\epsilon)$ is only determined by
$\epsilon>0$, so we actually have proved
$$\mathbf{E}|X(t)-Y(t)|^2\leq\epsilon\qquad \hbox{for all } t\in\R. $$
Thus $X(t)=Y(t)$ for all $t\in\R$ almost surely, which implies that $\mu(t)=\eta(t)$ for all $t\in\R$. That is,
$\mathcal{D}^{\eqref{tagA001}}$ has a unique element if it is not
empty. Finally, it follows from Theorem \ref{oldfind01} that this unique element is almost periodic. The proof is complete.
\end{proof}

Now we give another result for the existence of $\mathcal{D}^{\eqref{tagA001}}$'s
almost periodic elements without the information of the number of its elements.

\begin{thm}\label{mayber001}
Assume that \eqref{tagA001}'s coefficients satisfy condition {\bf (H)}, and there exists a
function
$V(\cdot,\cdot)$ satisfying condition {\bf (L)}. Suppose that there is some constant
$b>0$ such that \eqref{oldfind02} is valid on $\mathbb{R}_+\times\mathbb{R}^d$ and
for all $t\in\mathbb{R}_+$, $s_1, s_2\in\mathbb{R}_+$ and $x,y\in\mathbb{R}^d$,
\begin{align}\label{mayber002}
\mathscr{L}_{s_1,s_2}V(t,x-y):= & \frac{\partial V}{\partial t}(t,x-y)+\sum_{i=1}^d\frac{\partial V}{\partial x_i}(t,x-y)(f_i(t+s_1,x)-f_i(t+s_2,y))\notag\\
& +\frac{1}{2}\sum_{l=1}^m\sum_{i,j=1}^d(g_{il}(t+s_1,x)-g_{il}(t+s_2,y))\frac{\partial^2 V}{\partial x_i\partial x_j}(t,x-y)\\
& \cdot(g_{jl}(t+s_1,x)-g_{jl}(t+s_2,y))\leq 0\text{.}\notag
\end{align}
Then if \eqref{tagA001} has $\mathbf{L}^2$-bounded solutions, the distributions of these solutions
are a.a.p. on $\mathbb{R}_+$ and \eqref{tagA001} admits at least one solution with almost
periodic distribution.
\end{thm}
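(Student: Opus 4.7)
Let $X$ be the given $\mathbf{L}^2$-bounded solution of \eqref{tagA001} with $\|X\|_\infty\le r$ and write $\mu(t):=\mathcal{L}(X(t))$. The plan is to verify criterion (i) of Lemma \ref{uselessmust01} for $\mu$ on $\mathbb{R}_+$: for every sequence $\alpha'=\{\alpha_n'\}$ with $\alpha_n'\to+\infty$, I will extract a subsequence $\alpha$ along which $\mu(\cdot+\alpha_n)$ converges uniformly on $\mathbb{R}_+$. Once this is established, $\mu$ is a.a.p.\ on $\mathbb{R}_+$, and Proposition \ref{im} immediately yields a solution of \eqref{tagA001} with almost periodic distribution.

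To extract such a subsequence, note that $\{\mu(\alpha_n')\}$ has uniformly bounded second moments, hence is pre-compact in the Kantorovich--Rubinstein (Wasserstein-$1$) distance $W_1$. Pass to a subsequence $\alpha\subset\alpha'$ such that $W_1(\mu(\alpha_n),\mu(\alpha_m))\to 0$. For each pair $(n,m)$ I construct, on a fresh filtered probability space carrying a standard Brownian motion $W$, a pair $(\xi_n^0,\xi_m^0)$ of random variables realising an optimal $W_1$-coupling of $\mu(\alpha_n)$ and $\mu(\alpha_m)$, so that $\mathbf{E}|\xi_n^0-\xi_m^0|=W_1(\mu(\alpha_n),\mu(\alpha_m))$. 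The Lipschitz condition in {\bf (H)} then produces unique strong solutions on $\mathbb{R}_+$,
\[
\rmd\xi_n = f(t+\alpha_n,\xi_n)\rmd t + g(t+\alpha_n,\xi_n)\rmd W,\qquad \rmd\xi_m = f(t+\alpha_m,\xi_m)\rmd t + g(t+\alpha_m,\xi_m)\rmd W,
\]
with $\xi_n(0)=\xi_n^0$ and $\xi_m(0)=\xi_m^0$, both driven by the same $W$. Pathwise uniqueness upgrades to uniqueness in law, so $\mathcal{L}(\xi_n(t))=\mu(t+\alpha_n)$ and $\mathcal{L}(\xi_m(t))=\mu(t+\alpha_m)$ for every $t\ge 0$.

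Because $\xi_n$ and $\xi_m$ share the driver $W$, It\^o's formula applied to $V(t,\xi_n(t)-\xi_m(t))$ produces exactly the drift $\mathscr{L}_{\alpha_n,\alpha_m}V$ from \eqref{mayber002}, which is nonpositive by hypothesis. Reusing the localization $\tau_k:=\inf\{t\ge 0:|\xi_n(t)|\vee|\xi_m(t)|>k\}$ and the Fatou step from Step 1 of Theorem \ref{oldfind01}, I upgrade $V(t,\xi_n(t)-\xi_m(t))$ to a genuine supermartingale on $[0,+\infty)$, and Jensen's inequality does the same for $\sqrt{V(t,\xi_n(t)-\xi_m(t))}$. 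Doob's maximal inequality combined with \eqref{oldfind02} then yields
\[
\mathbf{P}\Bigl(\sup_{t\ge 0}|\xi_n(t)-\xi_m(t)|\ge\var\Bigr)\le \frac{\sqrt{b}\,W_1(\mu(\alpha_n),\mu(\alpha_m))}{\sqrt{V_\var}},
\]
where $V_\var:=\inf_{|x|\ge\var,\,t\ge 0}V(t,x)>0$ in the notation of \eqref{defineused01}. A L\'evy--Prokhorov-type estimate $\rho(\mathcal{L}(U),\mathcal{L}(V))\le\inf\{\eta>0:\mathbf{P}(|U-V|>\eta)<\eta\}$ converts this into $\sup_{t\ge 0}\rho(\mu(t+\alpha_n),\mu(t+\alpha_m))\to 0$, exactly the uniform Cauchy property needed in Lemma \ref{uselessmust01}.

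The main obstacle will be the coupling step: upgrading tightness of $\{\mu(\alpha_n')\}$ to genuine $W_1$-convergence (this is where the uniform second-moment bound is essential) and simultaneously solving the two shifted SDEs on a common probabilistic basis with the optimally coupled initial data while preserving both marginal laws. Once that setup is in place, the supermartingale estimate is a straightforward reprise of Step 1 of Theorem \ref{oldfind01}, and the existence of an almost periodic element follows immediately from Proposition \ref{im}.
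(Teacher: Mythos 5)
Your proposal is correct and follows essentially the same route as the paper: both arguments drive two differently shifted copies of the equation with a common Brownian motion, use \eqref{mayber002} to make $V(t,\cdot)$ of their difference a (localized, then genuine) supermartingale, apply Jensen and the maximal inequality to get a uniform-in-time Cauchy estimate in probability, and conclude via Lemma \ref{uselessmust01}(i) and Proposition \ref{im}. The only difference is the device for coupling the initial data — you use pairwise optimal $W_1$-couplings (with precompactness in $W_1$ from the uniform second-moment bound), whereas the paper extracts a weakly convergent subsequence $\mu(\alpha_n)\to T_\alpha\mu(0)$ and applies the Skorohod representation theorem to put all initial conditions on one space with $\mathbf{E}|\hat X_{n+p}-\hat X_n|\to 0$; both deliver the same key quantity and the rest of the argument is identical.
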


\begin{proof}
For sequence $\alpha=\{\alpha_n\}$ such that $\alpha_n\rightarrow+\infty$, assume that
$(T_{\alpha}f,T_{\alpha}g)$ exist uniformly on $\mathbb{R}\times S$ for any compact set
$S\subset\mathbb{R}^d$,
and $T_{\alpha}\mu(t)$ exists uniformly on compact intervals  (see, again, the proof of  \cite[Theorem 3.1]{LW} for details).
For $r>0$ and every $\mu(t)\in\mathcal{D}_r^{\eqref{tagA001}}$, we what to prove that
$T_{\alpha}\mu(t)$ uniformly exists on $\mathbb{R}_+$.

By Skorohod representation theorem,
there are suitable random variables $\hat{X}_n$, $\hat{X}$ such that
$\hat{X}_n\xrightarrow{a.s.}\hat{X}$ as $n\rightarrow+\infty$ and
$\mathcal{L}(\hat{X}_n)=\mu(\alpha_n)$, $\mathcal{L}(\hat{X})=T_{\alpha}\mu(0)$.
By the global Lipschitz condition of coefficients, for given Brownian motion $W(t)$,
there are strong solutions
$X_n(t)\in\mathcal{B}_r^{(f_{\alpha_n},g_{\alpha_n})}$ such that $X_n(0)=\hat{X}_n$, and
$\mathcal{L}(X_n(t))=\mu(t+\alpha_n)$. And for every $n,p\in\mathbb{N}$, we have
\begin{equation*}
\begin{split}
X_{n+p}(t)-X_n(t)= &~ \hat{X}_{n+p}-\hat{X}_n+\int_0^t(f(u+\alpha_{n+p}),X_{n+p}(u))-f(u+\alpha_n,X_n(u)))\rmd u\\
& ~ +\int_0^t(g(u+\alpha_{n+p},X_{n+p}(u))-g(u+\alpha_n,X_n(u)))\rmd W(u).
\end{split}
\end{equation*}

We now show that $V(t,X_{n+p}(t)-X_n(t))$ is a supermartingale
on $\mathbb{R}_+$ for given $n$ and $p$. For every $0\leq s< t<+\infty$, we define stopping times
$$\tau^{n,p}_k:=\inf\{t\geq s;|X_n(t)|\vee|X_{n+p}(t)|>k\}\text{, for every $k,n,p\in\mathbb{N}$.}$$
By It\^o's formula we have
\begin{equation*}
\begin{split}
& V(\tau^{n,p}_k\wedge t,X_{n+p}(\tau^{n,p}_k\wedge t)-X_n(\tau^{n,p}_k\wedge t))\\
= & V(s,X_{n+p}(s)-X_n(s))+\int_{s}^{\tau^{n,p}_k\wedge t}\mathscr{L}_{\alpha_{n+p},\alpha_n}V(u,X_{n+p}(u)-X_n(u))\rmd u\\
& +\int_s^{\tau^{n,p}_k\wedge t}\sum_{i=1}^m\sum_{j=1}^d[g_{ji}(u+\alpha_{n+p},X_{n+p}(u))-g_{ji}(u+\alpha_{n},X_n(u))]\\
& \cdot\frac{\partial V}{\partial x_j}(u,X_{n+p}(u)-X_n(u))\rmd W_i(u).
\end{split}
\end{equation*}
Since
\begin{equation*}
\begin{split}
& \mathbf{E}\bigg(\int_s^{\tau^{n,p}_k\wedge t}\sum_{i=1}^m\sum_{j=1}^d[g_{ji}(u+\alpha_{n+p},X_{n+p}(u))-g_{ji}(u+\alpha_{n},X_n(u))]\\
&\qquad  \cdot\frac{\partial V}{\partial x_j}(u,X_{n+p}(u)-X_n(u))\rmd W_i(u)|\mathcal{F}_s\bigg)\\
=& 0\text{, a.s.}
\end{split}
\end{equation*}
it follows from \eqref{mayber002} that
\begin{equation*}
\begin{split}
& \mathbf{E}(V(\tau^{n,p}_k\wedge t,X_{n+p}(\tau^{n,p}_k\wedge t)-X_n(\tau^{n,p}_k\wedge t))|\mathcal{F}_s)\\
\leq & \mathbf{E}(V(s,X_{n+p}(s)-X_n(s))|\mathcal{F}_s)\\
= & V(s,X_{n+p}(s)-X_n(s))\text{, a.s..}
\end{split}
\end{equation*}
Noting that $\tau^{n,p}_k\xrightarrow{a.s.}+\infty$ as
$k\rightarrow+\infty$ for every $n$, $p$, we have by Fatou's lemma
(similar to \eqref{mar}):
\begin{equation*}
\begin{split}
\mathbf{E}(V(t,X_{n+p}(t)-X_n(t))|\mathcal{F}_s)= & \mathbf{E}(\liminf_{k\rightarrow+\infty}V(\tau^{n,p}_k\wedge t,X_{n+p}(\tau^{n,p}_k\wedge t)-X_n(\tau^{n,p}_k\wedge t)|\mathcal{F}_s)\\
\leq & \liminf_{k\rightarrow+\infty}\mathbf{E}V(\tau^{n,p}_k\wedge t,X_{n+p}(\tau^{n,p}_k\wedge t)-X_n(\tau^{n,p}_k\wedge t)|\mathcal{F}_s)\\
\leq & V(s,X_{n+p}(s)-X_n(s))\text{, a.s..}
\end{split}
\end{equation*}
That is, $V(t,X_{n+p}(t)-X_n(t))$ is a supermartingale for given $p$ and $n$.
Similar to \eqref{Jen1}, by Jensen's inequality,
\begin{equation*}
\begin{split}
\mathbf{E}\left(\sqrt{V(t,X_{n+p}(t)-X_n(t))}~|\mathcal{F}_s\right) \leq & \sqrt{\mathbf{E}(V(t,X_{n+p}(t)-X_n(t))|\mathcal{F}_s)}\\
\leq & \sqrt{V(s,X_{n+p}(s)-X_n(s))}\text{, a.s..}
\end{split}
\end{equation*}
So $\sqrt{V(t,X_{n+p}(t)-X_n(t))}$ is also a supermartingale.

For any $\epsilon>0$,
we define $V_{\epsilon}>0$ as the one in \eqref{defineused01}. Then by the martingale inequality,
we have
\begin{equation}\label{want02}
\begin{split}
\mathbf{P}\left\{\sup_{t\in\mathbb{R}_+}|X_{n+p}(t)-X_n(t)|\geq\epsilon\right\}\leq & \mathbf{P}\left\{\sup_{t\in\mathbb{R}_+}\sqrt{V(t,X_{n+p}(t)-X_n(t))}\geq \sqrt{V_{\epsilon}}\right\}\\
\leq & \frac{\mathbf{E}\sqrt{V(0,\hat{X}_{n+p}-\hat{X}_n)}}{\sqrt{V_{\epsilon}}}.
\end{split}
\end{equation}
Because $\mathbf{E}|\hat{X}_n|^2\leq r^2$ and $\hat{X}_n\xrightarrow{a.s.}\hat{X}$, we have
(see \cite[Theorems $4.5.2$, $4.5.4$]{Ch}),
$$\mathbf{E}|\hat{X}_n|\rightarrow \mathbf{E}|\hat{X}|\text{, as $n\rightarrow+\infty$,}$$
and
$$\lim_{n\rightarrow+\infty}\mathbf{E}|\hat{X}_n-\hat{X}|=0.$$
So
$$\lim_{n\rightarrow+\infty}\sup_{p\in\mathbb{N}}\mathbf{E}|\hat{X}_{n+p}-\hat{X}_n|=0.$$
When $n$ is large enough, by \eqref{oldfind02} we have
$$\sup_{p\in\mathbb{N}}\mathbf{E}\sqrt{V(0,\hat{X}_{n+p}-\hat{X}_n)}\leq\sqrt{b}\sup_{p\in\mathbb{N}}\mathbf{E}|\hat{X}_{n+p}-\hat{X}_n|<\epsilon\sqrt{V_{\epsilon}}.$$
This together with \eqref{want02} implies
\begin{equation*}
\sup_{p\in\mathbb{N}}\mathbf{P}\{\sup_{t\in\mathbb{R}_+}|X_{n+p}(t)-X_n(t)|\geq\epsilon\}<\epsilon.
\end{equation*}
By Theorem $4.1.3$ in \cite{Ch}, there exists a suitable stochastic process $\widetilde{X}(t)$
such that $X_n(t)\xrightarrow{\mathbf{P}}\widetilde{X}(t)$ uniformly on $\mathbb{R}_+$.
Thus $\mu(t+\alpha_n)$ uniformly
converges to some
$T_{\alpha}\mu(t)$ on $\mathbb{R}_+$.

By Lemma \ref{uselessmust01}, we can see that each $\mu(t)\in\cal D_r^{\eqref{tagA001}}$ is a.a.p. on
$\mathbb{R}_+$.
So the distribution of any $\mathbf{L}^2$-bounded solution
is a.a.p. on $\mathbb{R}_+$. By Proposition \ref{im}, there exists some $\mathbf{L}^2$-bounded
solution of \eqref{tagA001} with almost periodic distribution. The proof is complete.
\end{proof}




To discuss the almost periodicity of SDE's solutions,  we need to find ways to obtain
$\mathbf{L}^2$-bounded solutions on $\R$, which may reduce to finding $\mathbf{L}^2$-bounded solutions on $\R_+$:

\begin{prop}[cf. \cite{LW}, Theorem $4.7$]\label{appl01}
Assume that \eqref{tagA001}'s coefficients satisfy condition {\bf (H)}, \eqref{tagA001} admits a
solution $\varphi$ on $[t_0,+\infty)$ for some $t_0\in\mathbb{R}$, and
$\sup_{t\geq t_0}||\varphi(t)||_2\leq M$ for some constant $M>0$, then \eqref{tagA001} has a solution
$\widetilde{\varphi}$ on $\mathbb{R}$ with
$||\widetilde{\varphi}(t)||_{\infty}\leq M$.
\end{prop}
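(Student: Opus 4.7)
The plan is a shift-and-compactness argument exploiting the almost periodicity of the coefficients. By Proposition \ref{uap-p2}-(ii) applied to the uniformly almost periodic pair $(f,g)$, there exists a sequence $\alpha=\{\alpha_n\}$ with $\alpha_n\to+\infty$ such that $T_\alpha f=f$ and $T_\alpha g=g$ uniformly on $\R\times S$ for every compact $S\subset\R^d$; intuitively, translating by such an $\alpha_n$ leaves the coefficients nearly unchanged, so the translated solutions should be approximate solutions of the original equation.

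For each $n$ I would set $\varphi_n(t):=\varphi(t+\alpha_n)$, which is a weak solution on $[t_0-\alpha_n,+\infty)$ of the shifted SDE with coefficients $(f(\cdot+\alpha_n,\cdot),g(\cdot+\alpha_n,\cdot))$, driven by the time-shifted Brownian motion $W_n(t):=W(t+\alpha_n)-W(\alpha_n)$. Since $\alpha_n\to+\infty$, for each fixed $N>0$ the interval $[-N,N]$ eventually lies in the domain of $\varphi_n$, and $\sup_{t\in[-N,N]}\|\varphi_n(t)\|_2\le M$ uniformly in $n$ large. Proposition \ref{key}, applied on $[-N,N]$, then yields a subsequence converging in distribution, uniformly on $[-N,N]$, to a weak solution of the limiting equation, which by our choice of $\alpha$ is exactly \eqref{tagA001}. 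The bound $M$ passes to the limit by lower semi-continuity of $x\mapsto|x|^2$ under weak convergence (Portmanteau theorem), giving $\sup_{t\in[-N,N]}\|\widetilde\varphi(t)\|_2\le M$.

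A standard diagonal extraction over $N=1,2,\ldots$ then produces a single subsequence whose finite-dimensional distributions converge on every compact interval of $\R$ to those of a single process $\widetilde\varphi$ defined on all of $\R$, whose restriction to any compact interval weakly solves \eqref{tagA001}. Consistency of these restrictions shows that $\widetilde\varphi\in\mathcal{B}^{(f,g)}$ with $\|\widetilde\varphi\|_\infty\le M$, as required.

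The main obstacle will be the bookkeeping needed to assemble the subsequential limits on a nested family of compact intervals into a \emph{single} weak solution on $\R$ in the precise sense of the definition of $\mathcal{B}^{(f,g)}$: one has to produce one filtered probability space carrying a single driving Brownian motion on which $\widetilde\varphi$ solves \eqref{tagA001} for all $t\in\R$, rather than a separate weak solution on each $[-N,N]$. This is handled by a Skorohod-type representation together with a martingale-problem / tightness argument on path space, which is essentially the same machinery already invoked in Proposition \ref{key}.
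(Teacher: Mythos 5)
The paper gives no proof of this proposition---it is quoted from \cite{LW}, Theorem 4.7---and your shift-and-compactness argument (translate by almost periods $\alpha_n\to+\infty$ with $T_\alpha(f,g)=(f,g)$, extract limits on $[-N,N]$, diagonalize) is precisely the standard proof of this fact and the one used in the cited source; it is correct. The only point needing care is that Proposition \ref{key} is stated for solutions belonging to $\mathcal{B}^{(f_n,g_n)}_r$, i.e.\ defined on all of $\R$, whereas your $\varphi_n$ live only on $[t_0-\alpha_n,+\infty)$, so you must rerun its tightness/martingale-problem argument on each fixed compact interval rather than cite it verbatim---a repair you correctly identify in your final paragraph.
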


We now conclude this section by giving a sufficient condition for the existence of $\mathbf{L}^2$-bounded solutions via Lyapunov functions:

\begin{thm}\label{appl02}
Assume that \eqref{tagA001}'s coefficients satisfy condition {\bf (H)}, and there is a function
$V$ satisfying condition {\bf (L)} such that for some constant $R>0$
$$a|x|^2\leq V(t,x)\leq b(t)|x|^2+c(t)\text{, when $|x|\leq R$,}$$
where constant $a>0$, $b(\cdot)$, $c(\cdot)$ are positive functions on $\mathbb{R}$. Assume further that
$$LV(t,x):=\frac{\partial V}{\partial t}+\sum_{i=1}^d\frac{\partial V}{\partial x_i}f_i+\sum_{l=1}^m\sum_{i,j=1}^dg_{il}\frac{\partial^2 V}{\partial x_i\partial x_j}g_{jl}\leq 0\text{, when $|x|\geq R$.}$$
Then if $X(t)$ is a solution of \eqref{tagA001} with initial condition $\mathbf{E}|X(t_0)|^2<+\infty$, $X(t)$ is $\mathbf{L}^2$-bounded on $[t_0,+\infty)$.
\end{thm}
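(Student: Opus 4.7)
The plan is to control $\mathbf{E}V(t,X(t))$ on $[t_0,+\infty)$ by combining It\^o's formula with a localizing stopping-time argument, and then transfer this to an $\mathbf{L}^2$-bound on $X(t)$ via the lower estimate $a|x|^2\le V(t,x)$. The dissipativity hypothesis $LV\le 0$ outside the ball $B_R=\{|x|\le R\}$ and the smoothness/boundedness supplied by condition {\bf (L)} together with the Lipschitz/linear-growth condition {\bf (H)} inside $B_R$ will supply the two ingredients I need.

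The concrete steps I would carry out are as follows. First, for each integer $k>R$ set $\tau_k:=\inf\{t\ge t_0: |X(t)|>k\}$. Because the derivatives of $V$ appearing in the It\^o expansion are bounded on $\mathbb{R}\times \{|x|\le k\}$ by {\bf (L)}, and $g$ is Lipschitz of linear growth by {\bf (H)}, the stochastic integral is a genuine martingale up to $\tau_k$ and vanishes in expectation. Applying It\^o's formula to $V(t\wedge\tau_k, X(t\wedge\tau_k))$ and taking expectation then yields
\[
\mathbf{E}V(t\wedge\tau_k, X(t\wedge\tau_k))
= \mathbf{E}V(t_0, X(t_0)) + \mathbf{E}\int_{t_0}^{t\wedge\tau_k} LV(s,X(s))\,\rmd s.
\]
Next I split the integrand according to whether $|X(s)|\ge R$ or $|X(s)|<R$. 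On $\{|X(s)|\ge R\}$ the hypothesis gives $LV\le 0$; on $\{|X(s)|<R\}$, the bounds of {\bf (L)} combined with the global Lipschitz bound of {\bf (H)} produce a finite constant $K_R$ with $|LV|\le K_R$ there. Finally I pass to the limit $k\to+\infty$ via Fatou's lemma, using that $\tau_k\nearrow+\infty$ almost surely, to recover an inequality for $\mathbf{E}V(t,X(t))$.

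The main obstacle is that the naive pointwise estimate on $\{|X|<R\}$ only yields a linear-in-$t$ upper bound, which is not enough for $\mathbf{L}^2$-boundedness. To overcome this I would use a pathwise excursion/renewal viewpoint: on each excursion of $X$ outside $B_R$, the hypothesis $LV\le 0$ makes $V(t,X(t))$ a supermartingale, so during such excursions $\mathbf{E}V$ cannot grow; at each re-entry time into $B_R$ one has $|X|\le R$ and hence $V(t,X(t))\le b(t)R^2+c(t)$. Thus the value of $V$ at any ``re-injection'' into $B_R$ is controlled by the upper bound on $B_R$, and taking the maximum of this and $\mathbf{E}V(t_0,X(t_0))$ supplies a uniform-in-$t$ upper bound for $\mathbf{E}V(t,X(t))$ on $[t_0,+\infty)$ (here one tacitly uses that $b,c$ are bounded, which I would note explicitly).

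With $\sup_{t\ge t_0}\mathbf{E}V(t,X(t))<\infty$ in hand, I conclude in one line: on $\{|X(t)|\le R\}$ we trivially have $|X(t)|^2\le R^2$, whereas on $\{|X(t)|>R\}$ we extend the lower bound by noting $V(t,x)\ge \inf_{s\in\R, |y|=R}V(s,y)>0$ on $\partial B_R$ together with {\bf (L)} to get $a|x|^2 \le V(t,x)$ in a form valid throughout $\R^d$ (enlarging $a$ if necessary); either way Markov's inequality applied to $V$ and the bound $\mathbf{E}V(t,X(t))\le M$ yield $\sup_{t\ge t_0}\mathbf{E}|X(t)|^2<\infty$, completing the proof.
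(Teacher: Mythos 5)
Your overall strategy coincides with the paper's: localize with stopping times, use $LV\le 0$ to make $V(t,X(t))$ a supermartingale during excursions outside $B_R$, control the value of $V$ at each re-entry into $B_R$ by its bound there, and convert the resulting uniform bound on $\mathbf{E}V(t,X(t))$ into an $\mathbf{L}^2$ bound via the lower estimate. The paper implements the excursion idea by stopping at $\tau^R=\inf\{t\ge t_0:|X(t)|\le R\}$ (so that $LV\le 0$ applies up to the stopping time) and then splitting on $\{\tau^R\ge t\}$ versus $\{\tau^R<t\}$, which is exactly your ``renewal'' viewpoint. Two remarks on the details. First, your worry that one must additionally assume $b,c$ bounded is unnecessary: condition {\bf (L)} already asserts that $V=D^0V$ is bounded on $\mathbb{R}\times S$ for every compact $S$, so $V\le \bar M$ on $\mathbb{R}\times B_R$ for some constant $\bar M$, and this is the bound the paper uses at re-entry.

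Second, and more seriously, your final step does not work as justified. You cannot ``extend'' the lower bound $a|x|^2\le V(t,x)$ from $B_R$ to all of $\mathbb{R}^d$ by enlarging $a$: positivity of $V$ on $\partial B_R$ together with {\bf (L)} (which gives only the pointwise condition $\inf_t V(t,x)>0$ and boundedness of $V$ on compacts) is compatible with $V$ being bounded at infinity, in which case a uniform bound on $\mathbf{E}V(t,X(t))$ yields no control on $\mathbf{E}|X(t)|^2$. The coercivity of $V$ outside $B_R$ must come from the hypothesis itself: the qualifier ``when $|x|\le R$'' is intended to attach only to the upper bound $V(t,x)\le b(t)|x|^2+c(t)$, while $a|x|^2\le V(t,x)$ is available where it is needed, namely for $|x|\ge R$ --- this is precisely how the paper concludes (``either $|X(t)|\le R$ or $a|X(t)|^2\le V(t,X(t))$''). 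With that reading, your argument closes; without it, no argument can, so the fix is to invoke the hypothesis rather than to manufacture the bound.
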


\begin{proof}
Suppose that $X(t)$ is the solution of \eqref{tagA001} with $\mathbf{L}^2$-bounded initial value at $t_0$.
Since the coefficients satisfy condition {\bf (H)}, $X(t)$ exists on $[t_0,+\infty)$. We define
a sequence of stopping times:
$$\tau_n^R:=\inf\{t\geq t_0: |X(t)|\geq n\text{, or $|X(t)|\leq R$}\},$$
and
$$\tau^R:=\inf\{t\geq t_0: |X(t)|\leq R\}.$$
Then $\tau_n^R\xrightarrow{a.s.}\tau^R$ as $n\rightarrow+\infty$.

Denote $B_R$ as the close ball $\{x\in\mathbb{R}^d: |x|\leq R\}$.
When $X(t_0)$ is supported on $\mathbb{R}^d-B_R$, by It\^o's formula, for
$t\geq t_0$,
\begin{equation*}
\begin{split}
\mathbf{E}V(t\wedge\tau^R_n,X(t\wedge\tau^R_n))= & \mathbf{E}V(t_0,X(t_0))+\mathbf{E}\int_{t_0}^{t\wedge\tau^R_n}LV(u,X(u))\rmd u\\
\leq & \mathbf{E}V(t_0,X(t_0))\leq c(t_0)+b(t_0)\mathbf{E}|X(t_0)|^2.
\end{split}
\end{equation*}
Then Fatou's lemma implies that
\begin{equation}\label{case1}
\mathbf{E}V(t\wedge\tau^R,X(t\wedge\tau^R))\leq \mathbf{E}V(t_0,X(t_0))\leq c(t_0)+b(t_0)\mathbf{E}|X(t_0)|^2,
\end{equation}
by letting $n\rightarrow+\infty$.

When $X(t_0)$ is supported on $\mathbb{R}^d$, denote $\bar{M}$ as a bound of $V(t,x)$ for
$|x|\leq R$, then by \eqref{case1} we have for $t\ge t_0$
\begin{equation}\label{boundedsolution}
\begin{split}
\mathbf{E}V(t,X(t))\leq & ~\mathbf{P}(\tau^R\geq t)\cdot\int_{\{X(t_0)>R\}}V(t_0,X(t_0,\omega))\rmd\mathbf{P}(\omega)\\
& ~+\mathbf{P}(\tau^R< t)\cdot \left[\bar{M}+\int_{\{X(t_0)>R\}}V(t_0,X(t_0,\omega))\rmd\mathbf{P}(\omega)\right]\\
\leq &~ 2[c(t_0)+b(t_0)\mathbf{E}|X(t_0)|^2]+\bar{M}.
\end{split}
\end{equation}
Note that in \eqref{boundedsolution} either $|X(t)|\leq R$ or $a|X(t)|^2\leq V(t,X(t))$, so $X(t)$ is
$\mathbf{L}^2$-bounded on $[t_0,+\infty)$.
\end{proof}

\section{Applications}

In this section, we illustrate our theoretical results by several examples.
Firstly we consider the simplest case of almost periodic SDEs.

\begin{exa}\label{exa01a}
Consider one-dimentional SDE
\begin{equation}\label{exaeq1}
\rmd X(t)=f(t,X(t))\rmd t+g(t,X(t))\rmd W(t)\text{,}
\end{equation}
where $f$, $g$ satisfy condition {\bf (H)} and  are $C^1$ in $x$. Assume that for some constant $c>0$,
\begin{equation}\label{exaeq2}
\sup_{(t,x)\in\mathbb{R}\times\mathbb{R}}\left|\frac{\partial g}{\partial x}(t,x)\right|^2\leq c\text{, \quad }\sup_{(t,x)\in\mathbb{R}\times\mathbb{R}}\frac{\partial f}{\partial x}(t,x)\leq -c\text{.}
\end{equation}
Then if $\mathcal{D}^{\eqref{exaeq1}}\neq \emptyset$, it has a  unique element which is almost
periodic.
\end{exa}
\begin{proof}
Let $V(t,x)=|x|^2$. Then it's easy to see that $V$ satisfies condition {\bf (L)}, and
$$\frac{\partial V}{\partial t}(t,x)=0,\quad  \frac{\partial V}{\partial x}(t,x)=2x, \quad \frac{\partial^2 V}{\partial x^2}(t,x)=2.$$
By \eqref{exaeq2} and mean value theorem, for every $x,y\in\mathbb{R}$
and every $t\in\mathbb{R}$, if $x\neq y$, there exist $\hat{\xi}=\hat{\xi}(t,x,y)$,
 $\xi=\xi(t,x,y)$ such that $\hat{\xi},\xi\in(x\wedge y,x\vee y)$, and
$$(f(t,x)-f(t,y))(x-y)=\frac{\partial f}{\partial x}(t,\xi)(x-y)^2\leq -c(x-y)^2,$$
$$(g(t,x)-g(t,y))^2=(x-y)^2|\frac{\partial g}{\partial x}(t,\hat{\xi})|^2\leq c(x-y)^2.$$
So
\begin{equation*}
\begin{split}
\mathscr{L}V(t,x-y)= & 2(f(t,x)-f(t,y))(x-y)+(g(t,x)-g(t,y))^2\\
\leq & -c(x-y)^2=-c|x-y|^2.
\end{split}
\end{equation*}

By Theorem \ref{stab01} we can easily get the required result.
\end{proof}

Now let us consider some two-dimensional applications.

\begin{exa}\label{second007}
Consider two-dimentional SDE:
\begin{equation}\label{second001}
\begin{cases}
\rmd X_1(t)=[f_1(t,X_1(t))+\sigma X_2(t)]\rmd t+[A_1(t)X_1(t)+g_1(t)]\rmd W_1(t),\\
\rmd X_2(t)=[f_2(t,X_2(t))-\sigma X_1(t)]\rmd t+[A_2(t)X_2(t)+g_2(t)]\rmd W_2(t),
\end{cases}
\end{equation}
where $f_i(t,x)$ are $C^1$ in $x$ and satisfy condition {\bf (H)} for $i=1,2$. $\sigma\neq 0$ is a constant.
Assume that $A_i$, $g_i$ are almost periodic and $f_i(t,0)\equiv 0$, $i=1,2$. Denote $a(t):=\max_{i=1,2}\{A_i^2(t),g_i^2(t)\}$.  Assume further that for $t,x\in\mathbb{R}$,
\begin{equation}\label{exa02a}
\frac{\partial f_i}{\partial x}(t,x)\leq -2a(t)-1, \quad  i=1,2.
\end{equation}
Then $\mathcal{D}^{\eqref{second001}}_r$ has a unique element which is almost periodic.
\end{exa}

\begin{proof}
Let $V(\cdot,\cdot):\mathbb{R}\times\mathbb{R}^2\rightarrow\mathbb{R}$, $V(t,x)=|x|^2=x_1^2+x_2^2$.
Then $V(t,x)$ satisfies condition {\bf (L)}, and  for $(t,x)\in\mathbb{R}\times\mathbb{R}^2$,
$i,j=1,2$,
$$\frac{\partial V}{\partial x_i}(t,x)=2x_i,\quad  \frac{\partial^2V}{\partial x_i^2}(t,x)=2, \quad  \frac{\partial^2V}{\partial x_i\partial x_j}(t,x)=0\text{, when $i\neq j$.}$$
By \eqref{exa02a} and mean value theorem, for $ x=(x_1,x_2), y=(y_1,y_2)\in\mathbb{R}^2$, there are
$\xi_i=\xi_i(t,x_i,y_i)\in(x_i\wedge y_i,x_i\vee y_i)$, $i=1,2$, such that
\begin{equation*}
\begin{split}
\mathscr{L}V(t,x-y)= & 2\sum_{i=1,2}(f_i(t,x_i)-f_i(t,y_i)(x_i-y_i)+\sum_{i=1,2}A_i^2(t)(x_i-y_i)^2\\
\leq & \sum_{i=1,2}[a(t)+2\frac{\partial f}{\partial x}(t,\xi_i)(x_i-y_i)^2]\\
\leq & (-3a(t)-2)(x_i-y_i)^2\leq -2|x-y|^2\text{.}
\end{split}
\end{equation*}

Since $f_i(t,0)=0$, for every $x_i$, $t$, there exist $\hat{\xi}_{i}=\hat{\xi}_{i}(t,x_i)\in(x_i\wedge 0,x_i\vee 0)$ such that
$$f_i(t,x_i)x_i=\frac{\partial f_i}{\partial x_i}(t,\hat{\xi}_{i})x_i^2\leq -(2a(t)+1)x_i^2.$$
So
\begin{equation*}
\begin{split}
LV(t,x)= & 2\sum_{i=1,2}f_i(t,x_i)x_i+\sum_{i=1,2}[A_i(t)x_i+g_i(t)]^2\\
\leq & \sum_{i=1,2}[2A_i^2(t)x^2_i+2g_i^2(t)+2\frac{\partial f}{\partial x}(t,\hat{\xi}_i)x_i^2]\\
\leq & \sum_{i=1,2}[-(2a(t)+2)x^2_i+2a(t)].
\end{split}
\end{equation*}
Obviously $LV(t,x)\leq 0$ when $|x|\geq \sqrt{2}$. By the global Lipschitz condition of the coefficients,
we can see that \eqref{second001} must have $\mathbf{L}^2$-bounded solutions from Proposition
\ref{appl01}
and Theorem \ref{appl02}. By Theorem \ref{stab01}, we can get the result required.
\end{proof}

\begin{exa}\label{examplefinal}
Consider two-dimentional SDE:
\begin{equation}\label{example001}
\begin{cases}
\rmd X_1(t)= & [-(A_1^2(t)+A_2^2(t)+1)X_1(t)+2A_1^2(t)X_2(t)]\rmd t\\
& +A_1(t)(X_1(t)-X_2(t))\rmd W_1(t),\\
\rmd X_2(t)= & [-(A_2^2(t)+A_1^2(t)+1)X_2(t)+2A_2^2(t)X_1(t)]\rmd t\\
& +A_2(t)(X_1(t)-X_2(t))\rmd W_2(t).\\
\end{cases}
\end{equation}
If $A_i(t)$ are almost periodic for $i=1,2$,
then \eqref{example001} has $\mathbf{L}^2$-bounded solutions, and all the $\mathbf{L}^2$-bounded solutions of \eqref{example001} have the same distribution which is
almost periodic.
\end{exa}

\begin{proof}
Similar to the proof of Example
\ref{second007}, let $V(t,x)=x_1^2+x_2^2$.
For $t\in\mathbb{R}$, $x=(x_1,x_2),y=(y_1,y_2)\in\mathbb{R}^2$, we have
\begin{equation*}
\begin{split}
\mathscr{L}V(t,x-y)= & 2\sum_{i=1,2}\left[-(A_1^2(t)+A_2^2(t)+1)(x_i-y_i)^2+A_i^2(t)(x_1-y_1)(x_2-y_2)\right]\\
& +\sum_{i=1,2}[(A_1^2(t)+A_2^2(t))(x_i-y_i)^2]-2(A_1^2(t)+A_2^2(t))(x_1-y_1)(x_2-y_2)\\
\leq & -2|x-y|^2,
\end{split}
\end{equation*}
and
\begin{equation*}
\begin{split}
LV(t,x)& =  2\sum_{i=1,2}[-(A_1^2(t)+A_2^2(t)+1)x_i^2+2A_i^2(t)x_1x_2]+\sum_{i=1,2}A_i^2(t)(x_1-x_2)^2\\
& \leq -2|x|^2\leq 0.
\end{split}
\end{equation*}
By Proposition \ref{appl01} and Theorem \ref{appl02}, \eqref{example001} has $\mathbf{L}^2$-bounded solutions.
By Theorem \ref{stab01}, $\mathcal{D}^{\eqref{example001}}$ has a unique element which is almost periodic.
\end{proof}


\begin{thebibliography}{xx}



\bibitem{AP}
L. Amerio and G. Prouse, Almost-periodic Functions and Functional Equations. Van Nostrand Reinhold Co., New York-Toronto, Ont.-Melbourne 1971 viii+184 pp.



\bibitem{AT}
L. Arnold and C. Tudor, Stationary and almost periodic solutions of
almost periodic affine stochastic differential equations, {\it
Stochastics Stochastics Rep.} {\bf 64}  (1998),  177--193.






\bibitem{Boch27}
S. Bochner, Beitr\"age zur theorie der fastperiodischen funktionen, I. Funktionen einer Variablen (German),
{\it Math. Ann. } {\bf 96}  \rm (1927), 119--147.




\bibitem{Boch62}
S. Bochner, A new approach to almost periodicity, \it Proc. Nat.
Acad. Sci. U.S.A. \bf 48 \rm (1962), 2039--2043.


\bibitem{B1}
H. Bohr, Zur theorie der fastperiodischen funktionen. (German) I. {\it Acta Math.} {\bf 45} (1924), 29--127.

\bibitem{B2}
H. Bohr, Zur theorie der fastperiodischen funktionen. (German) II. {\it Acta Math.} {\bf 46} \rm (1925), 101--214.

\bibitem{B3}
H. Bohr, Zur theorie der fastperiodischen funktionen. (German) III. {\it Acta Math.}  {\bf 47} \rm(1926), 237--281.


\bibitem{Ch}
K. L. Chung, A course in probability theory. Third edition. Academic press, Inc., San Diego, CA, 2001. xviii+419 pp.



\bibitem{C}
W. A. Coppel, Almost periodic properties of ordinary differential equations, {\it Ann. Mat. Pura Appl. (4)} {\bf 76} \rm (1967), 27--49.


\bibitem{DT}
G. Da Prato and C. Tudor, Periodic and almost periodic solutions for
semilinear stochastic equations,  {\it Stochastic Anal. Appl.} {\bf
13} (1995), 13--33.



\bibitem{DS}
L. G. Deysach and G. R. Sell, On the Existence of Almost Periodic Motion, {\it Michgan Math. J.} {\bf 12} (1965), 87--95.





\bibitem{Fink74}
A. M. Fink, Almost Periodic Differential Equations. Lecture Notes in Mathematics, Vol. 377. Springer-Verlag, Berlin-New York, 1974. viii+336 pp.





\bibitem{Hal}
A. Halanay,  Periodic and almost periodic solutions to affine
stochastic systems.  Proceedings of the Eleventh International
Conference on Nonlinear Oscillations (Budapest, 1987),  94--101,
J\'anos Bolyai Math. Soc., Budapest, 1987.


\bibitem{Hale64}
J. Hale, Periodic and almost periodic solutions of functional-differential equations, {\it Arch. Rational Mech. Anal.}
{\bf 15}  (1964), 289--304.















\bibitem{LZ}

B. M. Levitan and V. V. Zhikov, Almost Periodic Functions and Differential Equations. Translated from the Russian by L. W. Longdon.
Cambridge University Press, Cambridge-New York, 1982. xi+211 pp.

\bibitem{LW}
Z. Liu and W. Wang, Favard separation method for almost periodic stochastic differential equations, {\it J. Differential Equations} {\bf  11} \rm (2016), 8109--8136.

\bibitem{M}
A. A. Markov, Stabilit\"at in Liapvanoffschen Sinne und Fastperiodizit\"at, {\it Math. Z.} {\bf 36} \rm (1933), 708--738.

\bibitem{Miller}
R. K. Miller, Almost periodic differential equations as dynamical systems with applications to the extence of A. P. solution,
{\it J. Differential Equations} {\bf 1} \rm (1965), 337--345.

\bibitem{MT}
T. Morozan and C. Tudor, Almost periodic solutions of affine It\^o equations, {\it Stoch. Anal. Appl.}
{\bf 7}  (1989), 451--474.







\bibitem{P}
K. R. Parthasarathy, Probability Measures on Metric Spaces. Probability and Mathematical Statistics,
No. 3 Academic Press, Inc., New York-London 1967 xi+276 pp.


\bibitem{Pro}
Y. V. Prohorov, Convergence of random processes and limit theorems in probability theory, {\it
Theory Probab. Appl.} {\bf 1} \rm (1956), 157--214.


\bibitem{S66}
G. Seifert, Almost periodic solutions for almost periodic systems of ordinary differential equations, {\it J. Differential Equations} {\bf 2} (1966), 305--319.



\bibitem{Sell1}
G. R. Sell, Nonautonomous differential equations and topological dynamics. I. The basic theory, {\it Trans. Amer. Math. Soc.} {\bf 127}
\rm (1967), 241--262.

\bibitem{Sell2}
G. R. Sell, Nonautonomous differential equations and topological dynamics. II. Limiting equations, {\it Trans. Amer. Math. Soc.}
{\bf 127} \rm (1967), 263--283.





\bibitem{V}
C. V\^arsana, Asymptotic almost periodic solutions for stochastic differential equations, {\it T\^ohoku Math. J.} {\bf 41} \rm (1989), 609--618.



\bibitem{Y64}
T. Yoshizawa, Extreme stability and almost periodic solutions of functional-differential equations,
{\it Arch. Rational Mech. Anal.} {\bf 17} (1964), 148--170.



\bibitem{Y69}
T. Yoshizawa, Asymptotically almost periodic solutions of an almost periodic system, {\it
Funkcial. Ekvac.}  {\bf 12} (1969), 23--40.

\bibitem{Y75}
T. Yoshizawa, Stability Theory and the Existence of Periodic Solutions and Almost Periodic Solutions. Applied Mathematical Sciences,
Vol. 14. Springer-Verlag, New York-Heidelberg, 1975. vii+233 pp.





\end{thebibliography}
\end{document}